\documentclass[preprint,11pt]{elsarticle}

\usepackage{amssymb}
\usepackage{amsmath}
\usepackage{amsfonts}
\usepackage{amssymb}
\usepackage{indentfirst,latexsym,bm}
\usepackage{amsthm}
\usepackage{color,xcolor}
\usepackage[all]{xy}
\input{mathrsfs.sty}
\newtheorem{theorem}{Theorem}[section]
\newtheorem{lemma}[theorem]{Lemma}
\newtheorem{corollary}[theorem]{Corollary}

\theoremstyle{definition}

\theoremstyle{definition}
\newtheorem{example}{Example}[section]

\theoremstyle{remark}
\newtheorem{remark}{Remark}[section]
\theoremstyle{question}

\theoremstyle{problem}

\numberwithin{equation}{section}

\journal{XXX}

\begin{document}

\begin{frontmatter}

\title{Norm attainment of a class of block operator matrices}
\author[shnu]{Kangjian Wu}
\ead{wukjcool@163.com}
\author[shnu]{Jiayu Ling}
\ead{1840206596@qq.com}
\author[shnu]{Qingxiang Xu}
\ead{qingxiang\_xu@126.com}
\address[shnu]{Department of Mathematics, Shanghai Normal University, Shanghai 200234, PR China}

\begin{abstract} Given complex numbers $a, b, c$ and a non-negative continuous function $\varphi$ defined on $[0, +\infty)$, consider the $2 \times 2$ matrix
$$
M_t = \begin{pmatrix} a & t \\ ct & b\varphi(t) \end{pmatrix}, \quad t \in [0, +\infty).
$$
We establish  conditions for the strict monotonicity of the norm function $t \mapsto \|M_t\|$. As an application, we characterize the norm attainment of the corresponding block operator matrix
$$
T = \begin{pmatrix} aI_H & A \\ cA^* & b\varphi(|A|) \end{pmatrix},
$$
where $I_H$ is the identity operator on a Hilbert space $H$ and $A$ is a bounded linear operator from another Hilbert space to $H$.
\end{abstract}

\begin{keyword} Matrix norm function, block operator matrix, norm attainment
\MSC 47A05



\end{keyword}

\end{frontmatter}

\section{Introduction}

Throughout this paper, $\mathbb{N}$, $\mathbb{C}$, $\mathbb{R}$, and $\mathbb{R}_+$ denote the sets of positive integers, complex numbers, real numbers, and non-negative real numbers, respectively.
For Hilbert spaces $H$ and $K$, let $\mathbb{B}(K,H)$ be  the space of bounded linear operators from $K$ to $H$, with $\mathbb{B}(H):=\mathbb{B}(H,H)$. 
The identity operator on $H$ and the cone of positive operators in $\mathbb{B}(H)$ are denoted by $I_H$ and $\mathbb{B}(H)_+$, respectively. 

For an operator $A\in\mathbb{B}(H)$, we write $\mathcal{R}(A)$ and $\mathcal{N}(A)$ for its range and null space, respectively, while $\sigma(A)$ and $\sigma_p(A)$ denote its spectrum and point spectrum.
We write $C\big(\sigma(A)\big)$ for the algebra of continuous complex-valued functions on $\sigma(A)$.
For a square matrix $M$, $\operatorname{tr}(M)$ and $\operatorname{det}(M)$ denote its trace and determinant.
Unless stated otherwise, $\|\cdot\|$ refers to the operator norm.

An operator $A \in \mathbb{B}(K,H)$ is said to attain its norm \cite[Section~2.1]{WG} if $\|Ax\| = \|A\|$ for some unit vector $x \in K$; see \cite{BDSS,CN,PP,Ramesh} for related characterizations.
Given $a, b, c \in \mathbb{C}$ and $\varphi: [0, +\infty) \to \mathbb{R}$, we consider the block operator matrix
$$
T = \begin{pmatrix} aI_H & A \\ cA^* & b\varphi(|A|) \end{pmatrix},
$$
where $A\in\mathbb{B}(K,H)$ and $|A|=(A^*A)^\frac12$. Motivated by the characterization for the closedness of the numerical range of a quadratic operator \cite[Theorem~2.1]{TW},
the norm attainment of $T$ was characterized in \cite{WX} for the constant case $\varphi \equiv 1$. Replacing this special constant function with a general non-negative continuous  function $\varphi$ defined on $\mathbb{R}_+$, this paper aims to provide some non-trivial generalizations of the main results in \cite{WX}, which will be dealt with in Section~\ref{sec:norm attainment}. To this end, in Section~\ref{sec:strict monotonicity} we are led to investigate the strict monotonicity of the corresponding matrix norm functions.

\section{Strict monotonicity of a class of  matrix norm functions}\label{sec:strict monotonicity}
Given $a, b, c \in \mathbb{C}$, and a real-valued function $\varphi$ defined on $\mathbb{R}_+$, let
\begin{equation}\label{defn of M t}M_t=\left(
        \begin{array}{cc}
          a & t \\
          c t & b\varphi(t)\\
        \end{array}
      \right),\quad t\in \mathbb{R}_+.\end{equation}
The main purpose of this section is to investigate conditions under which the function $t\to \|M_t\|$ is
strictly increasing in $t$ on $\mathbb{R}_+$. To this end, we present the following technical result.

\begin{theorem}\label{thm:norm increasing-01}Let $\varphi:\mathbb{R}_+\to \mathbb{R}$ be a function satisfying the following conditions:
\begin{enumerate}
\item[{\rm (i)}] $\varphi$ is continuous on $\mathbb{R}_+$ and differentiable on $(0, +\infty)$;
\item[{\rm (ii)}] $\varphi$ is nonnegative and strictly increasing on $\mathbb{R}_+$.
\end{enumerate}
Then the following statements are equivalent:
\begin{enumerate}
\item[{\rm (a)}] For every $t_0>0$, $\varphi(t) \leq \varphi(t_0)(t/t_0)^4$ for all $t\in [t_0,+\infty)$;
\item[\rm (b)]  $\varphi(t)\ge \frac14 t\varphi^\prime(t)$ for all $t\in (0,+\infty)$;
\item[{\rm (c)}] For any $a, b, c \in \mathbb{C}$,  the function $t\to \|M_t\|$ is
strictly increasing in $t$ on $\mathbb{R}_+$, where $M_t$ is defined in \eqref{defn of M t}
\end{enumerate}
\end{theorem}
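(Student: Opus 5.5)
The equivalence (a)$\Leftrightarrow$(b) is elementary. Set $g(t)=\varphi(t)/t^4$ on $(0,+\infty)$. Then $g'(t)=\big(t\varphi'(t)-4\varphi(t)\big)/t^5$, so (b) is exactly $g'\le 0$ on $(0,+\infty)$. By the mean value theorem this is equivalent to $g$ being nonincreasing, and that is precisely (a): $g(t)\le g(t_0)$ for $t\ge t_0>0$. For the converse, one differentiates $g(t)\le g(t_0)$ at $t=t_0^+$.

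For (b)$\Rightarrow$(c) I would work with $f(t)=\|M_t\|^2$, the largest root of $\lambda^2-s(t)\lambda+|d(t)|^2=0$, where
$$s(t)=|a|^2+(1+|c|^2)t^2+|b|^2\varphi(t)^2,\qquad d(t)=ab\varphi(t)-ct^2 .$$
Thus $f=\tfrac12\big(s+\sqrt{s^2-4|d|^2}\big)$. At points where the two singular values are distinct, $2f-s>0$ and implicit differentiation gives
$$f'(2f-s)=f s'-(|d|^2)' .$$
So it suffices to prove $f s'>(|d|^2)'$ for $t>0$. Expanding,
$$s'=2(1+|c|^2)t+2|b|^2\varphi\varphi',$$
$$(|d|^2)'=2|ab|^2\varphi\varphi'-2\operatorname{Re}(ab\bar c)(t^2\varphi'+2t\varphi)+4|c|^2t^3 .$$
I would then bound $f$ from below by the squared norms of rows and columns, $f\ge |a|^2+t^2$ and $f\ge |c|^2t^2+|b|^2\varphi^2$, and also by the mixed quantity $f\ge s/2$. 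With these, each term of $(|d|^2)'$ should be dominated:
\begin{itemize}
\item $2|ab|^2\varphi\varphi'$ is absorbed by $|a|^2\cdot 2|b|^2\varphi\varphi'$;
\item $4|c|^2t^3$ is absorbed by $t^2\cdot 2|c|^2t+|c|^2t^2\cdot 2t$;
\item the cross term $2|abc|(t^2\varphi'+2t\varphi)$ is handled by AM--GM, and this is where (b), i.e.\ $t\varphi'\le 4\varphi$, enters. It turns $t^2\varphi'+2t\varphi$ into at most $6t\varphi$, which is then split as $|a|\,|b|\varphi\cdot|c|t$ against the leftover terms, with the positive terms $2t$ and $\varphi'>0$ supplying the strict inequality.
\end{itemize}
Points with coincident singular values, or with $f$ non-differentiable, are isolated or can be handled by continuity. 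Strict monotonicity then follows from $f'>0$ off a small exceptional set, together with the continuity of $f$. I expect this step to be the main obstacle. Getting the constant exactly $4$ requires a careful weighting in the AM--GM split rather than crude bounds. If the direct derivative estimate fails, I would try to reduce by unitary diagonal scalings to $a,b\ge0$ and to a real phase for $c$, where the worst case is $\operatorname{Re}(ab\bar c)=|abc|$.

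For (c)$\Rightarrow$(b) I argue by contradiction. Suppose $t_0\varphi'(t_0)>4\varphi(t_0)$ at some $t_0>0$. I choose $a,b,c$ so that the cross term dominates and $f s'-(|d|^2)'<0$ at $t_0$. A natural choice is $c=-1$ with $a,b>0$ tuned so that $d(t_0)$ is small relative to $s$; for instance, take $|a|^2\approx t_0^2$ and $b\varphi(t_0)\approx t_0^2/a$, which makes the expression proportional to $4\varphi(t_0)-t_0\varphi'(t_0)$. This yields $f'(t_0)<0$, so $\|M_t\|$ strictly decreases near $t_0$, contradicting (c).
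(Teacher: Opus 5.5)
\paragraph{(b)$\Rightarrow$(c).} Your (a)$\Leftrightarrow$(b) argument via $\varphi(t)/t^4$ is fine. The genuine gap is in (b)$\Rightarrow$(c): bounding $f$ from below by row norms, column norms and $s/2$ cannot work, however the AM--GM is weighted. Since $s'$ is a sum of nonnegative pieces, any scheme built on these bounds yields at most $Ls'-(|d|^2)'$, where $L=\max\{|a|^2+t^2,\,|c|^2t^2+|b|^2\varphi^2,\,|a|^2+|c|^2t^2,\,t^2+|b|^2\varphi^2\}\ge s/2$. Take $c=1$, $b>0$, $a=-b\varphi(t_0)$ and $t<t_0$. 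Then $L=b^2\varphi(t_0)^2+t^2$, and a direct expansion gives
\[
Ls'-(|d|^2)'=2b^2t\bigl(\varphi(t_0)-\varphi(t)\bigr)\bigl(2\varphi(t_0)-t\varphi'(t)\bigr).
\]
This is negative as soon as $t\varphi'(t)>2\varphi(t_0)$. For instance, $\varphi(t)=t^3$ satisfies (b), yet with $a=-1$, $b=c=1$, $t=0.99$ this quantity is $\approx-0.054$, while the true $fs'-(|d|^2)'\approx 0.065$. So such bounds cannot get past the constant $2$.

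What they discard is the gap $f-\max\{a_{11},a_{22}\}$, where $a_{11}=|a|^2+|c|^2t^2$, $a_{22}=t^2+|b|^2\varphi^2$ and $h=a+\bar{b}c\varphi$. This gap is of first order in $|h|$ and is exactly what produces the $4$. The missing idea is to use your quadratic in the exact form $(f-a_{11})(f-a_{22})=t^2|h|^2$. Differentiating it gives
\[
fs'-(|d|^2)'=2t|c|^2(f-a_{22})+\bigl(2t+2|b|^2\varphi\varphi'\bigr)(f-a_{11})+2t|h|^2+2t^2\varphi'\,\mathrm{Re}(b\bar{c}h).
\]
AM--GM on the first two terms, combined with the identity, bounds them below by $4|bc|t^{3/2}\sqrt{\varphi\varphi'}\,|h|$. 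This dominates $2t^2|bc|\varphi'|h|$ precisely when $t\varphi'\le 4\varphi$, and $2t|h|^2$ gives strictness off $\{h=0\}$. That set has at most one point when $bc\neq0$ because $\varphi$ is strictly increasing. The case $a=bc=0$, where $h\equiv0$, must be handled directly; e.g.\ for $a=b=0$, $|c|=1$ the singular values coincide for every $t$ and your formula says nothing.

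\paragraph{(c)$\Rightarrow$(b).} Your construction also fails as stated. With $c=-1$, $a=t_0$, $b=t_0/\varphi(t_0)$ you get $M_{t_0}=t_0\begin{pmatrix}1&1\\-1&1\end{pmatrix}$. So $|d(t_0)|=s(t_0)/2$ is maximal rather than small, the singular values coincide, and $f$ has a corner at $t_0$; $f'(t_0)$ does not exist. Since $f(t_0)=s(t_0)/2$, the expression $fs'-(|d|^2)'$ equals $\tfrac14\bigl(s^2-4|d|^2\bigr)'$ at $t_0$. This is $0$, as the derivative at a zero of a nonnegative differentiable function, and is not proportional to $4\varphi(t_0)-t_0\varphi'(t_0)$.

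Your choice lies in the paper's family $|c|=1$, $a=-\bar{b}c\varphi(t_0)$. There $f(t_0+\Delta t)=f(t_0)+d_2\Delta t+d_1|\Delta t|+o(\Delta t)$ with $d_1=b\varphi'(t_0)\sqrt{t_0^2+b^2\varphi(t_0)^2}$ and $d_2=2t_0+b^2\varphi(t_0)\varphi'(t_0)$. Moreover $d_1^2-d_2^2=t_0b^2\varphi'(t_0)\bigl[t_0\varphi'(t_0)-4\varphi(t_0)\bigr]-4t_0^2$. With your $b$ this is positive only if $t_0\varphi'(t_0)>(2+2\sqrt2)\,\varphi(t_0)$, so e.g.\ $\varphi(t)=t^{9/2}$ escapes your argument. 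The fix is to keep $h(t_0)=0$ but take $b$ large, with $b^2\varphi'(t_0)\bigl[t_0\varphi'(t_0)-4\varphi(t_0)\bigr]>4t_0$. Then conclude $f(t_0-\delta)>f(t_0)$ from the left one-sided expansion, not from a derivative at $t_0$.
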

\begin{proof}(a) $\Longrightarrow$ (b). Fix an arbitrary $t_0\in (0,+\infty)$. For any $t>t_0$, we have
\begin{align*}
    \frac{\varphi(t) - \varphi(t_0)}{t - t_0} \leq \varphi(t_0) \frac{(t/t_0)^4- 1}{t - t_0}.
\end{align*}
Taking the limit as $t \to t_0^+$ yields
\begin{align*}
    \varphi^\prime(t_0) \leq \varphi(t_0)\cdot \frac{4}{t_0}.
\end{align*}
Hence, $\varphi(t_0) \ge \frac{1}{4} t_0\varphi^\prime(t_0)$, as required.

(b) $\Longrightarrow$ (a). By (ii), we have $\varphi(t)>\varphi(0)\ge 0$ for all $t>0$. It follows that
 $$\frac{\varphi^\prime(t)}{\varphi(t)} \leq \frac{4}{t},\quad \forall t\in (0,+\infty).$$
 Thus, for any $t, t_0\in (0,+\infty)$ with $t>t_0$,
 $$  \ln\varphi(t) - \ln\varphi(t_0)=\int_{t_0}^t \frac{\varphi^\prime(s)}{\varphi(s)}ds\le \int_{t_0}^t \frac{4}{s}ds=4(\ln t - \ln t_0).$$
Exponentiating both sides yields the inequality in (a).

(b)$\Longrightarrow$(c). Notably, by (i) and (ii) we have  $\varphi^\prime(t)\ge 0$ for every $t\in (0,+\infty)$.
First, we consider the case that $bc\ne 0$. Define
$$A_t=M_t^*M_t,\quad f(t)=\|A_t\|,\quad g(t)=\big[\mbox{tr}(A_t)\big]^2-4\mbox{det}(A_t).$$
Since $A_t$ is a positive semi-definite $2\times 2$ matrix, its characteristic equation is
\begin{equation}\label{2 eigenvalues}\lambda^2-\mbox{tr}(A_t)\lambda+\mbox{det}(A_t)=0, \end{equation}
where the eigenvalues satisfy $f(t)=\lambda_1\ge \lambda_2\ge 0$. Hence,
\begin{align}f(t)=&\max\{\lambda_1,\lambda_2\}=\frac{\lambda_1+\lambda_2+\sqrt{(\lambda_1+\lambda_2)^2-4\lambda_1\lambda_2}}{2}\nonumber\\
\label{concrete exp}=&\frac{\mbox{tr}(A_t)+\sqrt{g(t)}}{2}.
\end{align}
Direct computation yields
\begin{align}\label{equ:exp of A t}
   A_t= \begin{pmatrix}
        a_{11}(t) & t\overline{h(t)}\\
        th(t) & a_{22}(t)
    \end{pmatrix},
\end{align}
in which
\begin{align}\label{equ:exp of a11, a22, h}
    a_{11}(t) = |a|^2 + |c|^2 t^2,\  h(t)= a + \bar{b}c\varphi(t), \  a_{22}(t) = t^2 + |b|^2\varphi^2(t).
\end{align}
Consequently,
\begin{align}g(t)=&\big[a_{11}(t)-a_{22}(t)\big]^2+4a_{11}(t)a_{22}(t)-4\mbox{det}(A_t)\notag\\
\label{expression of g}=&\big[a_{11}(t)-a_{22}(t)\big]^2+4t^2|h(t)|^2.
\end{align}
Since $\varphi$ is strictly increasing, the set $E=\big\{t\in (0,+\infty): h(t)=0\big\}$ is either a singleton or empty.
From the above expression for $g(t)$, we have $g(t)>0$ for all $t\in (0,+\infty)\setminus E$. This, together with \eqref{concrete exp}, implies that
   $f$ is differentiable at each  point in $(0,+\infty)\setminus E$. Since $f$ is continuous on $\mathbb{R}_+$ and $E$ is finite, it suffices to show  that
  $f^\prime (t)> 0$ for all $t\in (0,+\infty)\setminus E$.
Notably, for each $t\in (0,+\infty)\setminus E$, we have
$$2f(t)-\operatorname{tr}(A_t)=\sqrt{g(t)}\ge 2t|h(t)|>0.$$
Therefore, it suffices to prove that
\begin{equation}\label{equ:lings tech}f^\prime (t)\big[2f(t)-\operatorname{tr}(A_t)\big]> 0,\quad \forall t\in (0,+\infty)\setminus E.\end{equation}

In what follows, we assume $t\in (0,+\infty)\setminus E$.  Since $f(t)$ is a root of the characteristic equation \eqref{2 eigenvalues} of $A_t$, we have
\begin{align}\label{equ:2 mupliplication}
    \big[f(t) - a_{11}(t)\big] \big[f(t) - a_{22}(t)\big]= t^2|h(t)|^2.
\end{align}
Differentiating both sides with respect to $t$  gives
\begin{align*}
      f^\prime(t)\bigl[2f(t)-\operatorname{tr}(A_t)\bigr]=&a_{11}^\prime(t)\big[f(t) - a_{22}(t)\big]+ 2t|h(t)|^2+t^2 (|h(t)|^2)^\prime  \\
      &\qquad+ a_{22}^\prime(t)\big[f(t) - a_{11}(t)\big]\\
    =&x_1(t)+x_2(t)+x_3(t),
\end{align*}
where
\begin{align*}&x_1(t)=2t|c|^2\bigl[f(t)-a_{22}(t)\bigr],\quad x_2(t)=2t|h(t)|^2 + 2t^2\mathrm{Re}\bigl(b\bar{c}h(t)\bigr) \varphi^\prime(t),\\
&x_3(t)=\bigl[2t+2|b|^2\varphi(t)\varphi^\prime(t)\bigr]\bigl[f(t)-a_{11}(t)\bigr].\end{align*}

Since $A_t$ is given by \eqref{equ:exp of A t} and $f(t)=\|A_t\|$, we have
$$f(t)\ge \max\{a_{11}(t),a_{22}(t)\}.$$
By the AM-GM inequality,
\begin{align*}x_1(t)+x_3(t)\ge 2\sqrt{x_1(t)x_3(t)}=y(t)\sqrt{ \big[f(t) - a_{11}(t)\big] \big[f(t) - a_{22}(t)\big]},
\end{align*}
where
$$y(t)=4|c|\sqrt{t\big(t+|b|^2\varphi(t)\varphi^\prime(t)\big)}.
$$
Substituting \eqref{equ:2 mupliplication} into the right-hand side of the preceding inequality  yields
$$x_1(t)+x_3(t)\ge y(t)t|h(t)|.$$
Thus,
\begin{align*}
    \sum_{i=1}^3x_i(t)&\geq y(t)t|h(t)| + 2t|h(t)|^2 + 2t^2\mathrm{Re}\bigl(b\bar{c}h(t)\bigr) \varphi^\prime(t) \\
    &\geq 4|bc|t^{\frac32}\sqrt{\varphi(t)\varphi^\prime(t)}|h(t)| + 2t|h(t)|^2 - 2t^2|bc|\cdot |h(t)|\varphi^\prime(t) \\
     &> 4|bc|t^{\frac32}\sqrt{\varphi(t)\varphi^\prime(t)}|h(t)| - 2t^2|bc|\cdot |h(t)|\varphi^\prime(t) \\
       &= 4|bch(t)|t^{\frac32}\sqrt{\varphi^\prime(t)}\left[\sqrt{\varphi(t)}-\sqrt{\frac{t\varphi^\prime(t)}{4}}\right]\ge 0.
\end{align*}
This completes the proof of \eqref{equ:lings tech}.

Next, we consider the case $bc=0$. In this case, $h(t)\equiv a$. If $a\ne 0$, then the set $E$ defined earlier is empty, so the desired conclusion follows. If $bc=0$ and $a=0$, then $h(t)\equiv 0$, which implies $\sqrt{g(t)}=|a_{11}(t)-a_{22}(t)|$. Consequently,
$$f(t)=\max\{a_{11}(t),a_{22}(t)\}=\max\big\{|c|^2 t^2, t^2 + |b|^2\varphi^2(t)\big\}.$$
Since the mapping $t\to t^2$ is strictly increasing on $\mathbb{R}_+$, and since, by assumption,  $\varphi^2$ is monotonically increasing on $\mathbb{R}_+$,
it follows that for either $b=0$ or $c=0$, we have
$f(t_1)<f(t_2)$ whenever $0\le t_1<t_2$.

(c)$\Longrightarrow$(b). Suppose there exists some $t_0 \in (0, +\infty)$ such that $$t_0\varphi^\prime(t_0) > 4\varphi(t_0).$$
 Since $t_0>0$  and $\varphi(t_0) \geq 0$, we have  $\varphi^\prime(t_0) > 0$. We now construct explicit parameters $a,b$ and $c$ so that the function $f$ defined above fails to be strictly increasing in $t$ on $\mathbb{R}_+$. For this purpose, we set
 \begin{equation}\label{construction of abc}c=1, \quad b=\sqrt{\frac{4t_0+1}{\varphi^\prime(t_0)\big[t_0\varphi^\prime(t_0)-4\varphi(t_0)\big]}}, \quad a = -b\varphi(t_0).\end{equation}
 Then
 \begin{align*}h(t)=b\big[\varphi(t)-\varphi(t_0)\big],\quad a_{11}(t)=b^2\varphi^2(t_0)+t^2,\quad a_{22}(t) = t^2 + b^2\varphi^2(t).
 \end{align*}
 From \eqref{concrete exp}--\eqref{expression of g}, it follows that
 \begin{align}\label{new exp g}&g(t)=b^4\big[\varphi^2(t)-\varphi^2(t_0)\big]^2+4t^2b^2\big[\varphi(t)-\varphi(t_0)\big]^2,\\
 \label{new exp f}&f(t)=\frac12\left[2t^2+b^2\big[\varphi^2(t)+\varphi^2(t_0)\big]+\sqrt{g(t)}\right].
 \end{align}
Hence,
$$g(t_0)=0,\quad f(t_0)=t_0^2 + b^2\varphi^2(t_0).$$
Let $\Delta t=t-t_0$. Then
$$ \varphi(t)=\varphi(t_0)+\varphi^\prime(t_0)\Delta t+o(\Delta t).$$
Substituting this expansion of $\varphi(t)$ into \eqref{new exp g} and \eqref{new exp f}  gives
\begin{align*}g(t)=&4b^4\big[\varphi(t_0)\varphi^\prime(t_0)\big]^2 (\Delta t)^2+4t_0^2b^2\big(\varphi^\prime(t_0)\big)^2 (\Delta t)^2+o\big((\Delta t)^2\big)\\
=&4b^2\big(\varphi^\prime(t_0)\big)^2 f(t_0)(\Delta t)^2+o\big((\Delta t)^2\big),\\
f(t)=&(t_0+\Delta t)^2+b^2\left[\varphi^2(t_0)+\varphi(t_0)\varphi^\prime(t_0)\Delta t+o(\Delta t)\right]+\frac12 \sqrt{g(t)}\\
=&f(t_0)+\left[2t_0+b^2\varphi(t_0)\varphi^\prime(t_0)\right]\Delta t+b\varphi^\prime(t_0)\sqrt{f(t_0)}|\Delta t|+o(\Delta t).
\end{align*}
Therefore, for $-t_0<\Delta t<0$,
$$f(t_0+\Delta t)-f(t_0)=d|\Delta t|+\alpha(\Delta t),$$
where
 $d=d_1-d_2$, with
$$ d_1 = b\varphi^\prime(t_0)\sqrt{t_0^2 + b^2\varphi^2(t_0)} \quad \text{and} \quad d_2 = 2t_0+b^2\varphi(t_0)\varphi^\prime(t_0). $$
From the definition of $b$ in \eqref{construction of abc}, we calculate
\begin{align*}
    d_1^2 - d_2^2 &= t_0b^2 \varphi^\prime(t_0) \big[t_0\varphi^\prime(t_0) - 4\varphi(t_0)\big] - 4t_0^2 \\
    &= t_0(4t_0+1) - 4t_0^2 = t_0 > 0.
\end{align*}
Since $d_1>0$ and $d_2>0$, it follows that $d=d_1-d_2> 0$.

Choose $\delta_0\in (0,t_0)$ such that for $0<|\Delta t|\le \delta_0$,
$$|\alpha(\Delta t)|<\frac{d}{2}|\Delta t|.$$
Then
$$f(t_0-\delta_0)-f(t_0)=d\delta_0+\alpha(-\delta_0)>\frac12 d\delta_0>0.$$
Thus, $f$ is not strictly increasing in $t$ on $\mathbb{R}_+$.
\end{proof}

\begin{remark}From the proof of the preceding theorem, we observe that the number $4t_0+1$
appearing in \eqref{construction of abc} can be replaced by any number greater than $4t_0$.
\end{remark}

Next,  we draw upon a lemma from \cite{WX}.

\begin{lemma}\label{WX d zero}{\rm\cite[Lemma~2.3]{WX}}
Let $S\in \mathbb{B}(H\oplus K)$ be given by
$$
S =\left(
     \begin{array}{cc}
      a I_H & A \\
c A^* & bI_K\\
       \end{array}
   \right)
,$$  where  $A \in \mathbb{B}(K,H)$ and $a,b,c \in \mathbb{C}$. Then
\begin{equation*}\|S\|^2 = \frac{1}{2}\left(r + \sqrt{(|a|^2 - |b|^2)^2 + (|c|^2 - 1)^2\|A\|^4+2k\|A\|^2}\right),\end{equation*}
where
$$r=|a|^2+|b|^2+|c|^2+1,\quad k=|b+\bar{a}c|^2+|a+\bar{b}c|^2.$$
\end{lemma}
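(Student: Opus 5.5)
The plan is to reduce $\|S\|$ to a supremum of norms of $2\times 2$ scalar matrices, and then evaluate that supremum using the explicit eigenvalue formula \eqref{concrete exp}.

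\textbf{Step 1: reduction to $2\times 2$ matrices.} Take the polar decomposition $A=V|A|$ and use the spectral theorem for $|A|$. A unitary conjugation identifies $S$ (up to a harmless piece acting on $\mathcal{N}(A)\oplus\mathcal{N}(A^*)$) with a direct integral of the $2\times 2$ matrices
\[
S_t=\begin{pmatrix} a & t\\ ct & b\end{pmatrix},\qquad t\in\sigma(|A|).
\]
Concretely, $S^*S$ is a $2\times 2$ operator matrix whose entries are polynomials in $|A|^2$ and $AA^*$, intertwined by $V$. Hence
\[
\|S\|^2=\max\Bigl\{\sup_{t\in\sigma(|A|)}\|S_t\|^2,\ \text{contributions from the kernels}\Bigr\}.
\]
The kernel contributions are $|a|^2$ or $|b|^2$, corresponding to $t=0$. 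Each of these is dominated by $\|S_0\|^2=\max\{|a|^2,|b|^2\}$ whenever $0\in\sigma(|A|)$, and otherwise by $\|S_t\|^2$ for any $t$ in the spectrum. That $\|S_t\|\ge\max(|a|,|b|)$ follows from the diagonal entries of $S_t^*S_t$.

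\textbf{Step 2: explicit formula for $\|S_t\|^2$.} Specialize \eqref{concrete exp}--\eqref{expression of g} to $\varphi\equiv 1$. Then
\[
a_{11}=|a|^2+|c|^2t^2,\qquad a_{22}=t^2+|b|^2,\qquad h=a+\bar b c.
\]
The trace is $|a|^2+|b|^2+(|c|^2+1)t^2$. Expanding
\[
g(t)=(a_{11}-a_{22})^2+4t^2|h|^2
\]
gives
\[
g(t)=(|a|^2-|b|^2)^2+(|c|^2-1)^2t^4+2t^2\Bigl[(|a|^2-|b|^2)(|c|^2-1)+2|a+\bar b c|^2\Bigr].
\]
The key identity to check is
\[
(|a|^2-|b|^2)(|c|^2-1)+2|a+\bar bc|^2=|b+\bar ac|^2+|a+\bar bc|^2.
\]
This reduces to showing that
\[
|b+\bar a c|^2-|a+\bar b c|^2=(|a|^2-|b|^2)(|c|^2-1),
\]
which is verified by expanding both sides: the cross terms $2\mathrm{Re}(\bar a\bar b c)$ cancel. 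So $g(t)=(|a|^2-|b|^2)^2+(|c|^2-1)^2t^4+2kt^2$.

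\textbf{Step 3: monotonicity and the supremum.} The function $t\mapsto \tfrac12\bigl(\mathrm{tr}+\sqrt{g}\bigr)$ is nondecreasing in $t\ge0$, since both the trace and $g$ are nondecreasing in $t$ ($k\ge0$). This is also the case $\varphi\equiv1$ of the monotonicity discussion. The supremum over $\sigma(|A|)$ is therefore attained at $t=\max\sigma(|A|)=\|A\|$. Substituting $t=\|A\|$ gives the stated formula.

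The main obstacle is Step 1: making the reduction rigorous, in particular handling the kernels and the intertwining $V|A|V^*=|A^*|$. The cleanest route is to compute $S^*S$ and $SS^*$ directly and use continuous functional calculus on the commutative pieces, checking that the norm equals the maximum of the $2\times2$ symbols over the spectrum.
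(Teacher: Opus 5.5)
Your argument is correct, but it proves the corrected formula rather than the one printed. The paper itself gives no proof of this lemma (it is quoted from \cite{WX}), but your route is what its own tools yield. Lemma~\ref{lem:norm_equivalence} with $\varphi\equiv 1$, together with the Remark after it, gives $\|S\|=\max_{t\in\sigma(|A^*|)}\|S_t\|$, and the kernels are already handled there. The $\mathcal N(A)$ part is the operator $Z$, which is dominated because $\|Y\|\ge |b|$. The $\mathcal N(A^*)$ part is the symbol at $t=0\in\sigma(|A^*|)$. So you can cite that argument for your Step~1 instead of leaving it as the ``main obstacle.'' Steps~2--3 are the specialization of \eqref{concrete exp}--\eqref{expression of g}, and your identity for $k$ and the cancellation of the cross terms are correct.

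One caution on Step~3: do not justify it by the paper's ``monotonicity discussion.'' Theorem~\ref{thm:norm increasing-01} requires $\varphi$ strictly increasing, and the constant case of Corollary~\ref{cor:norm increasing-01} is itself deduced from this very lemma. Your direct argument is what carries the step: the trace is strictly increasing and $g$ is nondecreasing since $k\ge 0$.

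Your last sentence is wrong as written. At $t=\|A\|$ the trace of $S_t^*S_t$ is $|a|^2+|b|^2+(|c|^2+1)\|A\|^2$. So you have proved the formula with $r=|a|^2+|b|^2+(|c|^2+1)\|A\|^2$, not with the printed $r=|a|^2+|b|^2+|c|^2+1$. The printed version is false:
\begin{itemize}
\item For $A=0$ it gives $\|S\|^2=\max\{|a|^2,|b|^2\}+\tfrac12(|c|^2+1)$ instead of $\max\{|a|^2,|b|^2\}$.
\item For $a=b=0$, $c=1$ it gives $\|S\|^2=1$ regardless of $A$, whereas $\|S\|=\|A\|$.
\end{itemize}
It is a misprint. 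The corrected $r$ is also what the paper actually relies on when it invokes the lemma for strict monotonicity in Corollaries~\ref{cor:norm increasing-01} and~\ref{cor:sufficient condition increasing norm}. You should flag this discrepancy explicitly rather than claim that substituting $t=\|A\|$ reproduces the stated formula.
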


As a consequence of Theorem~\ref{thm:norm increasing-01} and Lemma~\ref{WX d zero}, we obtain the following two corollaries.

\begin{corollary}\label{cor:norm increasing-01} Let $\varphi(t)=k+dt^\alpha$ with $k,d,\alpha\in \mathbb{R}_+$.
Then the following statements are equivalent:
\begin{enumerate}
\item[{\rm (a)}] $\alpha\in [0,4]$;
\item[{\rm (b)}] For any $a, b, c \in \mathbb{C}$,  the function $t\to \|M_t\|$ is
strictly increasing in $t$ on $\mathbb{R}_+$, where $M_t$ is defined in
\eqref{defn of M t}.
\end{enumerate}
\end{corollary}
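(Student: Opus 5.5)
The plan is to split according to whether $\varphi$ is constant or strictly increasing. In the strictly increasing case I would apply Theorem~\ref{thm:norm increasing-01} directly. In the constant case I would use Lemma~\ref{WX d zero}.

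\emph{Constant case} ($\alpha=0$, or $d=0$). Here $\varphi\equiv \kappa$ with $\kappa\ge 0$, where $\kappa=k+d$ if $\alpha=0$ and $\kappa=k$ if $d=0$. Then $M_t$ is the $1\times 1$-block instance of the operator $S$ in Lemma~\ref{WX d zero}, with $H=K=\mathbb{C}$, $A=t$ and $b$ replaced by $b\kappa$. The lemma gives $\|M_t\|^2$ as $\tfrac12$ times the sum of two terms. The first is the $r$-term, which (reading the lemma with the $\|A\|^2$ factor on $|c|^2+1$) is
$$|a|^2+|b\kappa|^2+(|c|^2+1)t^2 .$$
The second is the square root of
$$(|a|^2-|b\kappa|^2)^2+(|c|^2-1)^2t^4+2k't^2,$$
where $k'\ge 0$ is the constant $k$ of the lemma computed with $b\kappa$ in place of $b$. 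The square-root term is nondecreasing in $t\in\mathbb{R}_+$. The $r$-term is strictly increasing because $|c|^2+1>0$. Hence $t\mapsto\|M_t\|$ is strictly increasing for all $a,b,c$. This settles (a)$\Rightarrow$(b) when $\alpha=0$. It also shows that when $d=0$, statement (b) holds regardless of $\alpha$. So the implication (b)$\Rightarrow$(a) must be understood under the standing assumption that $\varphi$ is genuinely non-constant, i.e.\ $d>0$. I would state this explicitly at this point.

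\emph{Strictly increasing case} ($d>0$, $\alpha>0$). I would check hypotheses (i) and (ii) of Theorem~\ref{thm:norm increasing-01}. The function $\varphi(t)=k+dt^\alpha$ is continuous on $\mathbb{R}_+$, differentiable on $(0,+\infty)$ with $\varphi'(t)=d\alpha t^{\alpha-1}$, nonnegative, and strictly increasing. By the theorem, statement (b) of the corollary is therefore equivalent to condition (b) of the theorem, namely
$$k+dt^\alpha\ \ge\ \tfrac{\alpha}{4}\,dt^\alpha,\quad\text{i.e.}\quad k+d\left(1-\tfrac{\alpha}{4}\right)t^\alpha\ge 0\qquad\forall\, t>0.$$
If $\alpha\le 4$, every term is nonnegative and the inequality holds. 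If $\alpha>4$, the coefficient $d(1-\alpha/4)$ is negative and $t^\alpha\to+\infty$, so the inequality fails for large $t$. Hence the condition holds exactly when $\alpha\in(0,4]$. Combined with the constant case, this gives the equivalence.

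The only delicate point is the bookkeeping of the degenerate parameters. Theorem~\ref{thm:norm increasing-01} requires strict monotonicity of $\varphi$, so it cannot be applied when $\varphi$ is constant. That is exactly why Lemma~\ref{WX d zero} is needed, and why the case $d=0$ with $\alpha>4$ must be excluded. The remaining verifications are routine.
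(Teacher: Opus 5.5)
Your proof is correct and follows essentially the same route as the paper: Lemma~\ref{WX d zero} handles the constant case ($d=0$ or $\alpha=0$), and for $d,\alpha>0$ you apply the equivalence (b)$\Leftrightarrow$(c) of Theorem~\ref{thm:norm increasing-01} together with $t\varphi'(t)=\alpha d t^\alpha$. Two of your side remarks are also right. When $d=0$ and $\alpha>4$, statement (b) holds while (a) fails, so the equivalence as stated genuinely needs $d>0$; the paper's own proof passes over this degenerate case. And the $r$ in Lemma~\ref{WX d zero} should indeed read $|a|^2+|b|^2+(|c|^2+1)\|A\|^2$.
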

\begin{proof}If $d=0$ or $\alpha=0$, then for any $a, b, c \in \mathbb{C}$ and $t\in \mathbb{R}_+$,
\begin{equation*}M_t=\left(
        \begin{array}{cc}
          a & t \\
          c t & b_1\\
        \end{array}
      \right),\end{equation*}
      where $b_1$ is a fixed complex number. By Lemma~\ref{WX d zero}, $\|M_t\|$ is
strictly increasing in $t$ on $\mathbb{R}_+$.

Now suppose that $d,\alpha\in (0,+\infty)$. In this case,  conditions (i)--(ii) of Theorem~\ref{thm:norm increasing-01} are clearly satisfied.  Moreover, we have $t \varphi^\prime(t)=\alpha d t^\alpha$ for all $t\in (0,+\infty)$.
Hence,  for every $t\in (0,+\infty)$,
\begin{align*}\varphi(t)\ge \frac14 t\varphi^\prime(t)\Longleftrightarrow  k t^{-\alpha}+d\ge \frac14 \alpha d. \end{align*}
It is clear that  the latter inequality holds for all $t\in (0,+\infty)$ if and only if $\alpha\le 4$.
\end{proof}

\begin{example}\label{ex:01}
Let $\varphi(t)=2t^5$ and $t_0=1$. Then
\[
\varphi(t_0)=2,\qquad
\varphi'(t_0)=10,\qquad
t_0\varphi'(t_0)-4\varphi(t_0)=2.
\]
Guided by the construction in the proof of Theorem~\ref{thm:norm increasing-01}, we choose $c=1$ and define
\[
b=\sqrt{\frac{20t_0}{\varphi'(t_0)\bigl[t_0\varphi'(t_0)-4\varphi(t_0)\bigr]}}=1,
\]
and set $a=-b\varphi(t_0)=-2$. It follows that for $t>0$,
\begin{equation}\label{M t 4 counterexample}
M_t=
\begin{pmatrix}
-2 & t \\
t  & 2t^5
\end{pmatrix},
\end{equation}
whose eigenvalues are
\[
\lambda_{1,2}=t^5-1\pm\sqrt{t^{10}+2t^5+t^2+1}.
\]
Since $M_t$ is a real symmetric matrix, we have
\[
f(t):=\|M_t\|
=\max\{|\lambda_1|,|\lambda_2|\}
=1-t^5+\sqrt{t^{10}+2t^5+t^2+1},
\qquad t\in[0,1].
\]

A direct computation shows that for $t\in(0,1)$,
\[
f'(t)=\frac{t\,g(t)}{\sqrt{t^{10}+2t^5+t^2+1}},
\]
where
\[
g(t)=5t^8+5t^3+1-5t^3\sqrt{t^{10}+2t^5+t^2+1}.
\]

One verifies that for $t\in[0,1]$,
\[
g(t)<0\iff h(t)>0
\quad\text{and}\quad
g(t)=0\iff h(t)=0,
\]
with
\begin{equation}\label{defn of h}
h(t)=15t^8-10t^3-1.
\end{equation}

Let $t_1=4^{-1/5}$. Then
\[
h(t_1)=15t_1^8-10t_1^3-1
=-\frac{25}{4}t_1^3-1<0,
\]
so that $h(t_1)h(1)<0$. Hence there exists $t_*\in(t_1,1)$ such that $h(t_*)=0$.

Since $h'(t) = 30t^2(4t^5 - 1)$, the factor $4t^5 - 1$ dictates the sign of the derivative. Consequently, $h$ is strictly decreasing on $[0, t_1]$ and strictly increasing on $[t_1, 1]$. Given that $h(0)=-1$ and  $h(t_*) = 0$, it follows immediately that $h(t) <\max\{h(0), h(t_*)\}=0$ for $t \in (0, t_*)$, and $h(t) > 0$ for $t \in (t_*, 1]$. By the equivalence established earlier, this implies $g(t_*) = 0$, with $g(t) > 0$ on $(0, t_*)$ and $g(t) < 0$ on $(t_*, 1]$. Therefore, $\|M_t\|$ is strictly increasing on $[0, t_*]$ and strictly decreasing on $[t_*, 1]$.

Numerical computation (MATLAB) gives $t_*\approx0.9431$, and
\[
\|M_{0.9431}\|\approx2.2384
\;>\;
2.2361\approx\|M_1\|.
\]
Thus, the mapping $t\mapsto\|M_t\|$ is \emph{not} strictly increasing on $\mathbb{R}_+$.
\end{example}

\begin{corollary}For any $\alpha>0$, define $\varphi(t)=\ln(1+\alpha t)$ for $t>0$. Then for all $a, b, c \in \mathbb{C}$,  the function $t\to \|M_t\|$ is
strictly increasing in $t$ on $\mathbb{R}_+$, where $M_t$ is given by \eqref{defn of M t}.
\end{corollary}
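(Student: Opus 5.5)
The plan is to apply Theorem~\ref{thm:norm increasing-01}, direction (b)$\Longrightarrow$(c), to $\varphi(t)=\ln(1+\alpha t)$. We set $\varphi(0)=0$, so that $\varphi$ is defined and continuous on all of $\mathbb{R}_+$. Conditions (i) and (ii) are immediate. The function $\varphi$ is continuous on $\mathbb{R}_+$ and differentiable on $(0,+\infty)$, with $\varphi^\prime(t)=\frac{\alpha}{1+\alpha t}>0$. Hence $\varphi$ is strictly increasing, and since $\varphi(0)=0$ it is nonnegative. It therefore remains only to check condition (b), namely
\[
\ln(1+\alpha t)\ge \frac14\cdot\frac{\alpha t}{1+\alpha t},\quad \forall t\in(0,+\infty).
\]

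To verify this, substitute $s=\alpha t>0$; the inequality then no longer depends on $\alpha$. We use the elementary bound $\ln(1+s)\ge \frac{s}{1+s}$ for $s>0$. One way to see it is to note that $u(s)=\ln(1+s)-\frac{s}{1+s}$ satisfies $u(0)=0$ and
\[
u^\prime(s)=\frac{1}{1+s}-\frac{1}{(1+s)^2}=\frac{s}{(1+s)^2}>0.
\]
Alternatively, it follows from $\ln x\ge 1-1/x$ with $x=1+s$. Since $\frac{s}{1+s}>0$, we have $\frac{s}{1+s}\ge\frac14\cdot\frac{s}{1+s}$, so (b) holds with room to spare.

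With (b) established, Theorem~\ref{thm:norm increasing-01} gives (c): for all $a,b,c\in\mathbb{C}$, the map $t\mapsto\|M_t\|$ is strictly increasing on $\mathbb{R}_+$. There is no genuine obstacle here. The only care needed is the domain issue: the statement defines $\varphi$ for $t>0$, so we extend it continuously by $\varphi(0)=0$ to meet hypothesis (i) on $\mathbb{R}_+$.
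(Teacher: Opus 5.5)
Your proposal is correct and follows the paper's own argument: the paper also checks $t\varphi^\prime(t)=\frac{\alpha t}{1+\alpha t}\le \varphi(t)$ and then applies Theorem~\ref{thm:norm increasing-01}, (b)$\Longrightarrow$(c). Your explicit verification of hypotheses (i)--(ii), the extension $\varphi(0)=0$, and the proof of $\ln(1+s)\ge \frac{s}{1+s}$ fill in details the paper leaves implicit.
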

\begin{proof}For any $t>0$, we compute
\begin{align*}t\varphi^\prime(t)=\frac{\alpha t}{1+\alpha t}\le \varphi(t).
\end{align*}
The result follows directly from Theorem~\ref{thm:norm increasing-01}.
\end{proof}

To proceed, we require another technical result as follows.

\begin{theorem}\label{thm:norm increasing iff condition}
    Let $a,c,d \in \mathbb{C}$. For $s\in [0,+\infty)$, define $N_s\in M_2(\mathbb{C})$ by
    \begin{equation*}
        N_s=\begin{pmatrix}
            a & d \\
            c & s
        \end{pmatrix}.
    \end{equation*}
    Then $\|N_s\|$ is a strictly increasing function of $s$ on $[0,+\infty)$ if and only if one of the following conditions holds:
    \begin{enumerate}
        \item[\upshape (i)] $\mathrm{Re}\bigl(\bar{a}cd\bigr) > 0$;
        \item[\upshape (ii)] $\mathrm{Re}\bigl(\bar{a}cd\bigr) = 0$ and $|c| + |d| > 0$;
        \item[\upshape (iii)] $a = c = d = 0$.
    \end{enumerate}
\end{theorem}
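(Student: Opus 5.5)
The plan is to work with the squared norm $F(s)=\|N_s\|^2$, the largest eigenvalue of $B_s=N_s^*N_s$, as in the proof of Theorem~\ref{thm:norm increasing-01}. Direct computation gives
$$B_s=\begin{pmatrix} |a|^2+|c|^2 & \bar a d+\bar c s\\ a\bar d+cs & |d|^2+s^2\end{pmatrix},$$
together with
$$\operatorname{tr}(B_s)=|a|^2+|c|^2+|d|^2+s^2,\qquad \det(B_s)=|as-cd|^2 .$$
Set $g(s)=[\operatorname{tr}(B_s)]^2-4\det(B_s)$. Then $F(s)=\frac12\big(\operatorname{tr}(B_s)+\sqrt{g(s)}\big)$, and $F$ satisfies $F^2-\operatorname{tr}(B_s)F+|as-cd|^2=0$. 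Differentiating this identity wherever $g(s)>0$ gives the key formula
$$F'(s)\big[2F(s)-\operatorname{tr}(B_s)\big]=2s\big[F(s)-|a|^2\big]+2\,\mathrm{Re}(\bar a c d),$$
using $|as-cd|^2=|a|^2s^2-2s\,\mathrm{Re}(\bar a c d)+|cd|^2$. Here $2F-\operatorname{tr}(B_s)=\sqrt{g(s)}>0$. The set where $g(s)=0$ is the set where $B_s$ is scalar. Its off-diagonal entry $\bar a d+\bar c s$ vanishes for at most one $s$ unless $c=0=\bar a d$. So, as in Theorem~\ref{thm:norm increasing-01}, outside degenerate situations it suffices to prove $F'>0$ off a finite set and use continuity.

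\emph{Sufficiency.} Norms of the first column and first row give
$$F(s)\ge \max\{|a|^2+|c|^2,\ |a|^2+|d|^2\}.$$
Hence $F(s)-|a|^2\ge \max\{|c|^2,|d|^2\}$.
\begin{itemize}
\item Under (i), the right-hand side of the key formula is strictly positive for every $s\ge 0$, so $F'>0$ wherever $g>0$.
\item Under (ii), it equals $2s[F(s)-|a|^2]\ge 2s\max\{|c|^2,|d|^2\}>0$ for $s>0$.
\item Under (iii), $N_s=\operatorname{diag}(0,s)$ and $\|N_s\|=s$ directly.
\end{itemize}
Strict monotonicity of $F$, and hence of $\|N_s\|$, then follows from positivity of the derivative off a finite set together with continuity. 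If $g$ vanishes on an interval (only when $c=0$ and $\bar a d=0$), I would handle that case separately by a direct computation, since $B_s$ is then diagonal.

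\emph{Necessity.} Suppose none of (i)--(iii) holds. There are two cases.
\begin{itemize}
\item If $\mathrm{Re}(\bar a c d)<0$, then $a,c,d\neq 0$. So $B_0$ has nonzero off-diagonal entry $\bar a d$, hence is not scalar, and $g(0)>0$. Then $F$ is differentiable near $0$ with $F'(0)=2\,\mathrm{Re}(\bar a c d)/\sqrt{g(0)}<0$, so $F$ decreases near $0$.
\item If $\mathrm{Re}(\bar a c d)=0$ and $c=d=0$, then $a\ne0$ (otherwise (iii) holds). In this case $N_s=\operatorname{diag}(a,s)$, so $\|N_s\|=\max\{|a|,s\}$ is constant on $[0,|a|]$ and not strictly increasing.
\end{itemize}

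The main obstacle I expect is bookkeeping of the degenerate points where $g$ vanishes, which is where differentiability fails, particularly at $s=0$ in the necessity direction. The observation that $g(s)=0$ forces $B_s$ to be scalar, hence forces its off-diagonal entry to vanish, should dispose of this cleanly.
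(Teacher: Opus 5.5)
Your proof is correct and follows the paper's route: your quantity $F'(s)\bigl[2F(s)-\operatorname{tr}(B_s)\bigr]=2s\bigl[F(s)-|a|^2\bigr]+2\,\mathrm{Re}(\bar a c d)$ is exactly the paper's $h(s)=h_1(s)+h_2(s)+2\,\mathrm{Re}(\bar a c d)$, and the paper's bound $h_1\ge 0$ is your column estimate $F\ge |a|^2+|c|^2$. The differences are cosmetic: you use the row estimate $F\ge |a|^2+|d|^2$ where the paper transposes $N_s$, and you compute $F'(0)<0$ directly where the paper takes $\lim_{s\to 0^+}h(s)$. The separate case you anticipate never occurs: your $g(s)=0$ also forces the diagonal entries $|a|^2+|c|^2$ and $|d|^2+s^2$ of $B_s$ to coincide, so $g$ vanishes at most once on $[0,+\infty)$.
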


\begin{proof}For $s\in [0,+\infty)$, let $B_s=N_s^*N_s$. Direct computation gives
\begin{align*}
B_s=\left(
      \begin{array}{cc}
        |a|^2+|c|^2 & \bar{a}d+\bar{c}s \\
        a\bar{d}+cs & |d|^2+s^2 \\
      \end{array}
    \right).
\end{align*}
As shown in the proof of Theorem~\ref{thm:norm increasing-01},  we have
\begin{equation*}\|N_s\|^2=\|B_s\|=\frac{\mbox{tr}(B_s)+\sqrt{f(s)}}{2},
\end{equation*}
where
\begin{align*}f(s)=&\big[\mbox{tr}(B_s)\big]^2-4\mbox{det}(B_s)=\big(|a|^2+|c|^2+|d|^2+s^2\big)^2-4\mbox{det}(B_s)\\
=&\big(|d|^2+s^2-|a|^2-|c|^2\big)^2+4(|d|^2+s^2)(|a|^2+|c|^2)-4\mbox{det}(B_s)\\
=&\left(s^2+|d|^2-|a|^2-|c|^2\right)^2+4\left|a\bar{d}+cs\right|^2.
\end{align*}
It follows that  the set $E=\big\{s\in (0,+\infty): f(s)=0\big\}$ is either a singleton or empty, and
\begin{align}\label{half f}&\sqrt{f(s)}\ge \Big|s^2+|d|^2-|a|^2-|c|^2\Big|,\\
&f^\prime(s)=4s\big(s^2+|d|^2-|a|^2-|c|^2\big)+8\big(|c|^2s+\mbox{Re}(\bar{a}cd)\big).\nonumber
\end{align}

To analyze the monotonicity of $\|N_s\|$, define
$$g(s)=s^2+\sqrt{f(s)},\quad s\in [0,+\infty).$$
Notably, $\|N_s\|^2$ is  strictly increasing if and only if $g$ is  strictly increasing.
 A direct  calculation shows that for all $ s\in (0,+\infty)\setminus E$,
\begin{equation*}
    g^\prime(s)=
        2s+\frac{2s\big(s^2+|d|^2-|a|^2-|c|^2\big)+4\big(|c|^2s+\mbox{Re}(\bar{a}cd)\big)}{\sqrt{f(s)}}=\frac{2 h(s)}{\sqrt{f(s)}},\end{equation*}
where
\begin{equation}\label{3 terms of h}h(s)=h_1(s)+h_2(s)+2\mathrm{Re}(\bar{a}cd),\end{equation}
in which
$$h_1(s)=s\Big(\sqrt{f(s)}+s^2+|d|^2-|a|^2-|c|^2\Big),\quad h_2(s)=2|c|^2 s.$$

 Suppose  $\|N_s\|$ is strictly increasing on $[0, +\infty)$. Then $g^\prime(s)\ge 0$ for all $ s\in (0,+\infty)\setminus E$, implying $h(s)\ge 0$ for such $s$. Hence,
$$2\mathrm{Re}(\bar{a}cd)=\lim_{s\to 0^+} h(s)\ge 0.$$
If none of (i)--(iii) holds, then $a\ne 0$ and $c=d=0$. In this case, $\|N_s\| = \max\{|a|, s\}$, which is not strictly increasing (e.g.,\,$\|N_{0}\|=\|N_{|a|}\|$). This contradiction shows one of (i)--(iii) must hold.

 Conversely, assume one of (i)--(iii) holds.
    By \eqref{half f}, we have $h_1(s)\ge 0$ for all $s\in (0,+\infty)$. So if either $\mathrm{Re}(\bar{a}cd)>0$, or $c\ne 0$ with $\mathrm{Re}(\bar{a}cd)=0$,
    then using \eqref{3 terms of h}
    we conclude that $h(s)>0$ for all $s\in \mathbb{R}_+$. It follows that $g^\prime(s)>0$ for any $ s\in (0,+\infty)\setminus E$, and thus $\|N_s\|$ is strictly increasing on $[0, +\infty)$.

    For each $s\in \mathbb{R}_+$, let $N_s^T$ denote the transpose of $N_s$. Since $\|N_s\|=\|N_s^T\|$, it follows that $\|N_s\|$ is strictly increasing on $[0, +\infty)$ whenever $d\ne 0$ and $\mathrm{Re}(\bar{a}cd)=0$.

    Finally, if $a=c=d=0$, then $\|N_s\|=s$, so $\|N_s\|$ is obviously strictly increasing on $[0, +\infty)$.
    This completes the proof.
\end{proof}

A combination of Lemma~\ref{WX d zero} and Theorem~\ref{thm:norm increasing iff condition} yields the following corollary.

\begin{corollary}\label{cor:sufficient condition increasing norm}Let $M_t$ be defined by \eqref{defn of M t} for $t\in\mathbb{R}_+$,
where $a,b,c\in\mathbb{C}$ satisfy $\mathrm{Re}\bigl(\bar{a}\bar{b}c\bigr)\ge 0$. If $\varphi:\mathbb{R}_+\to \mathbb{R}$ is nonnegative and  strictly increasing,
then $\|M_t\|$ is strictly increasing in $t$ on $\mathbb{R}_+$.
\end{corollary}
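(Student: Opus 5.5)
Since $\varphi$ is strictly increasing and nonnegative, and $\|M_t\|$ depends on $t$ through both $t$ and $s=\varphi(t)$, I will split the monotonicity into two directions, increasing $t$ in the off-diagonal entries and increasing $s$ in the $(2,2)$ entry, and handle each with an earlier result. The key is to consider the two-parameter matrix
$$
P(t,s)=\begin{pmatrix} a & t\\ ct & bs\end{pmatrix},\qquad t,s\in\mathbb{R}_+,
$$
so that $M_t=P(t,\varphi(t))$. For $0\le t_1<t_2$ we then have
$$
\|M_{t_1}\|=\|P(t_1,\varphi(t_1))\|\le \|P(t_1,\varphi(t_2))\|<\|P(t_2,\varphi(t_2))\|=\|M_{t_2}\|,
$$
provided that (1) $s\mapsto\|P(t,s)\|$ is nondecreasing for fixed $t$, and (2) $t\mapsto\|P(t,s)\|$ is strictly increasing for fixed $s$.

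For (2), fix $s$ and view $P(t,s)$ as the matrix $S$ of Lemma~\ref{WX d zero} with $H=K=\mathbb{C}$, $A=t$, and the constant $b$ replaced by $bs$. Lemma~\ref{WX d zero} gives $\|P(t,s)\|^2$ as $\frac12\bigl(r+\sqrt{\alpha+\beta t^4+2kt^2}\bigr)$ with $\alpha,\beta,k\ge 0$ independent of $t$. The trace term $r$ must be read correctly here: in the lemma, $r$ arises from $\operatorname{tr}(S^*S)$, which is $|a|^2+|bs|^2+(|c|^2+1)\|A\|^2$, so it contains $(|c|^2+1)t^2$. This term is strictly increasing in $t$, and the square root is nondecreasing, so (2) holds with strict inequality.

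For (1), fix $t$. If $b=0$ there is nothing to prove. Otherwise write $bs=|b|s\,e^{i\theta}$ and multiply $P(t,s)$ on the left by the unitary $\operatorname{diag}(1,e^{-i\theta})$. This does not change the norm and produces $N_\sigma$ of Theorem~\ref{thm:norm increasing iff condition} with entries $a$, $d=t$, $c'=e^{-i\theta}ct$, and $\sigma=|b|s$. Since $e^{-i\theta}=\bar b/|b|$, we get
$$
\operatorname{Re}(\bar a c' d)=\frac{t^2}{|b|}\operatorname{Re}(\bar a\bar b c)\ge 0
$$
by hypothesis. If this quantity is positive, condition (i) of Theorem~\ref{thm:norm increasing iff condition} applies. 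If it vanishes and $t>0$, then $|c'|+|d|>0$ and condition (ii) applies. If $t=0$, then $\|P(0,s)\|=\max\{|a|,|b|s\}$, which is nondecreasing in $s$. In every case $\sigma\mapsto\|N_\sigma\|$ is nondecreasing, and $\sigma=|b|s$ is nondecreasing in $s$, so (1) follows.

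The main obstacle is the unitary normalization in (1): one must check that the phase of $b$ converts the hypothesis $\operatorname{Re}(\bar a\bar bc)\ge0$ exactly into $\operatorname{Re}(\bar a c' d)\ge0$, and must separately treat the degenerate case $t=0$, where Theorem~\ref{thm:norm increasing iff condition} gives only the trivial nondecreasing bound. Strictness is taken entirely from step (2), so only weak monotonicity in $s$ is needed.
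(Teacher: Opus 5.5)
Your proof is correct and follows essentially the same route as the paper. The shared ingredients are the unitary normalization by $\operatorname{diag}(1,e^{-i\theta})$, the intermediate matrix with off-diagonal parameter $t_1$ and $(2,2)$-entry built from $\varphi(t_2)$, Theorem~\ref{thm:norm increasing iff condition} for the $(2,2)$-entry step, and Lemma~\ref{WX d zero} for the off-diagonal step.

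The differences are minor. You use only weak monotonicity in $s$, so the case $t_1=0$ is absorbed into your step (1) rather than needing the paper's separate diagonal comparison. You also rightly read the trace term $r$ in Lemma~\ref{WX d zero} as containing $(|c|^2+1)\|A\|^2$, a factor missing from the lemma as printed.
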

\begin{proof}If $b=0$, then by Lemma~\ref{WX d zero}, $\|M_t\|$ is strictly increasing on $\mathbb{R}_+$.

Now suppose  $b\ne 0$.  Write $b=|b|e^{\mathrm{i}\theta}$ for some $\theta\in\mathbb{R}$, and define $c_1=ce^{-i\theta}$. Consider the  matrices
    $$U_\theta=\left(
                 \begin{array}{cc}
                   1 & 0 \\
                   0 & e^{-i\theta} \\
                 \end{array}
               \right),\quad B_t=\begin{pmatrix}
a & t\\
c_1t & |b|\,\varphi(t)
\end{pmatrix},\quad t\ge 0.$$
For any $t\in [0,+\infty)$, we have $B_t=U_\theta
    M_t$, so $\|M_t\|=\|B_t\|$.  Observe that for any $t\in (0,+\infty)$,
    $$\mbox{Re}(\bar{a}tc_1t)=\frac{t^2}{|b|}\mbox{Re}(\bar{a}\bar{b}c)\ge 0\quad \mbox{and}\quad t+|c_1|t>0.$$
 By Theorem~\ref{thm:norm increasing iff condition},   for any $t>0$ and $s_2>s_1\ge 0$,
$$\left\|
    \begin{pmatrix}
a & t\\
c_1t & s_1\end{pmatrix}
\right\|< \left\|
    \begin{pmatrix}
a & t\\
c_1t & s_2
\end{pmatrix}
\right\|.$$
 Meanwhile, Lemma~\ref{WX d zero} implies that for any $z\in\mathbb{C}$ and $s_2>s_1\ge 0$,
$$\left\|
    \begin{pmatrix}
a & s_1\\
c_1s_1 & z\end{pmatrix}
\right\|< \left\|
    \begin{pmatrix}
a & s_2\\
c_1s_2 & z
\end{pmatrix}
\right\|.$$
Consequently, for any $t_1,t_2\in (0,+\infty)$ with $t_1<t_2$, we have
    \begin{align*}\|B_0\|\le \left\|
    \begin{pmatrix}
a & 0\\
0 & |b|\,\varphi(t_1)
\end{pmatrix}
\right\|<\|B_{t_1}\|
<\left\|
    \begin{pmatrix}
a & t_1\\
c_1t_1 & |b|\,\varphi(t_2)
\end{pmatrix}
\right\|
<\|B_{t_2}\|.
\end{align*}
Therefore, $\|M_{t_2}\|>\|M_{t_1}\|>\|M_0\|$. This demonstrates that  $\|M_t\|$ is strictly increasing in $t$ on $\mathbb{R}_+$.
\end{proof}

\section{Norm attainment of a class of block operator matrices}\label{sec:norm attainment}
In this section, we investigate the
  norm attainment  of the block operator matrix $T$ defined in \eqref{def of T}. The central goal is to precisely characterize the norm attainment of $T$ in terms of its component operator $A$. Throughout this section, $H$ and $K$ are non-trivial Hilbert spaces.

 We begin by recalling  several well-known properties about the norm attainment (see, e.g., \cite[Corollary~2.4, Proposition~2.5]{CN}).

\begin{lemma}\label{lem:attainment-observation}
    For every $A \in \mathbb{B}(K,H)$, the following statements are equivalent:
    \begin{enumerate}
    \item[\upshape (i)] $A$ attains its norm;
    \item[\upshape (ii)] $A^*$ attains its norm;
    \item[\upshape (iii)] $\|A\|^\alpha\in\sigma_p(|A^*|^\alpha)$ for every $\alpha > 0$;
    \item[\upshape (iv)] $\|A\|^\alpha\in\sigma_p(|A^*|^\alpha)$ for some $\alpha > 0$;
    \item[\upshape (v)]  $\|A\|^\alpha\in\sigma_p(|A|^\alpha)$ for every $\alpha > 0$;
     \item[\upshape (vi)]  $\|A\|^\alpha\in\sigma_p(|A|^\alpha)$ for some $\alpha > 0$.
    \end{enumerate}
\end{lemma}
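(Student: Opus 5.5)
The plan is to prove the equivalences by a chain through the polar-type relations $\||A|x\|=\|Ax\|$ and $AA^*=|A^*|^2$, together with the spectral theorem for positive operators. Throughout we may assume $A\neq 0$; when $A=0$ every statement holds trivially, since $0\in\sigma_p(0)$ because $H,K$ are nontrivial.

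First, (i)$\Leftrightarrow$(vi) with $\alpha=1$, i.e.\ $A$ attains its norm if and only if $\|A\|\in\sigma_p(|A|)$. Since $\|Ax\|=\||A|x\|$ for every $x\in K$ and $\|A\|=\||A|\|$, $A$ attains its norm if and only if the positive operator $P=|A|$ does. For a positive $P$ with $\|P\|=\lambda$ and a unit $x$ with $\|Px\|=\lambda$, we have $\langle(\lambda^2-P^2)x,x\rangle=0$. Because $\lambda^2-P^2\ge 0$, this forces $(\lambda^2-P^2)x=0$, i.e.\ $(\lambda-P)(\lambda+P)x=0$. Set $y=(\lambda+P)x$. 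Then $y\neq 0$, since $\langle y,x\rangle\ge\lambda>0$, and $Py=\lambda y$. The converse direction is trivial.

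Second, handle the powers: (v)$\Leftrightarrow$(vi)$\Leftrightarrow$ the case $\alpha=1$. For $P\ge 0$ and $\alpha>0$ we have $\|P^\alpha\|=\|P\|^\alpha$, and $\mathcal N(P^\alpha-\lambda^\alpha)=\mathcal N(P-\lambda)$ for $\lambda>0$. To see this, use the continuous functional calculus: $t^\alpha-\lambda^\alpha=(t-\lambda)u(t)$ with $u$ continuous and strictly positive on $\sigma(P)\subset[0,\infty)$. Hence $u(P)$ is invertible and the kernels agree. Applying this to $P=|A|$ and to $P=|A^*|$ gives (v)$\Leftrightarrow$(vi)$\Leftrightarrow$[$\|A\|\in\sigma_p(|A|)$], and likewise (iii)$\Leftrightarrow$(iv)$\Leftrightarrow$[$\|A\|\in\sigma_p(|A^*|)$].

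Third, connect $A$ and $A^*$. Applying the first step to $A^*$ shows that (ii) is equivalent to $\|A^*\|=\|A\|\in\sigma_p(|A^*|)$. It then remains to show (i)$\Leftrightarrow$(ii). If $|A|x=\lambda x$ with $x\ne0$ and $\lambda=\|A\|>0$, then $Ax\neq 0$ because $\|Ax\|=\lambda\|x\|$. Moreover $AA^*(Ax)=A(A^*A)x=\lambda^2Ax$, so $\lambda^2\in\sigma_p(|A^*|^2)$, and by the previous step (with $\alpha=2$) we get $\lambda\in\sigma_p(|A^*|)$. The reverse implication is symmetric. This closes the cycle of equivalences.

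The only mildly delicate point is the kernel identity for fractional powers. It is handled by the factorization through an invertible function of $P$, where invertibility comes from $\lambda>0$ and $u>0$ on $[0,\infty)$. Everything else is routine.
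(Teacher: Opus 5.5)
Your proof is correct, and it is a genuinely different route because the paper gives no proof of this lemma: it is quoted as a known fact from \cite[Corollary~2.4, Proposition~2.5]{CN}. The citation buys brevity. Your argument buys self-containment, using only the identity $\|Ax\|=\||A|x\|$, positivity, and the continuous functional calculus.

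One step can be shortened. In the first step, $\lambda+P$ is invertible (since $\lambda>0$ and $P\ge 0$), and it commutes with $\lambda-P$. So $(\lambda-P)(\lambda+P)x=0$ already gives $Px=\lambda x$, and the norming vector $x$ is itself an eigenvector; you do not need $y$.

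Your kernel identity $\mathcal{N}(P^\alpha-\lambda^\alpha)=\mathcal{N}(P-\lambda)$ is precisely the special case $\varphi(t)=t^\alpha-\lambda^\alpha$, $t_0=\lambda=\|P\|\in\sigma(P)$, of the paper's own Lemma~\ref{lem:kernel_equality}. Your proof of it, via $t^\alpha-\lambda^\alpha=(t-\lambda)u(t)$ with $u$ continuous and strictly positive on $[0,\infty)$, is more elementary than the paper's ideal-theoretic argument. It works because $t\mapsto t^\alpha$ is strictly increasing with nonzero derivative at $\lambda>0$. The paper's argument is what one needs for a general continuous $\varphi$ vanishing only at $t_0$, where no continuous invertible cofactor need exist (e.g.\ $\varphi(t)=|t-t_0|^{1/2}$).

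Your transfer of eigenvectors between $A^*A$ and $AA^*$ via $x\mapsto Ax$ is sound. The reduction of the case $A=0$ correctly uses the standing assumption that $H$ and $K$ are nontrivial.
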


The following lemma establishes the coincidence of the norms of certain operators and the equivalence of their attainment.

\begin{lemma}\label{lem:norm_equivalence}
    Let $\varphi:\mathbb{R}_+\to \mathbb{R}$ be continuous and satisfy $\varphi(t) \geq \varphi(0) \geq 0$ for all $t\in\mathbb{R}_+$.  Then
    $\|T\| = \|\widetilde{T}\|$ for any $A \in \mathbb{B}(K, H)$ and $a,b, c \in\mathbb{C}$,  where $T\in\mathbb{B}(H\oplus K)$ and $\widetilde{T}\in\mathbb{B}(H\oplus H)$ are defined by
    \begin{equation}\label{def of T}
        T = \begin{pmatrix}
            aI_{H} & A \\
            cA^* & b\varphi(|A|)
        \end{pmatrix}, \quad
        \widetilde{T} = \begin{pmatrix}
            aI_{H} & |A^*| \\
            c|A^*| & b\varphi(|A^*|)
        \end{pmatrix}.
    \end{equation}
    Furthermore,
    $T$ attains its norm if and only if $\widetilde{T}$ attains its norm.
    \end{lemma}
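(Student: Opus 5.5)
Our approach is to use the polar decomposition $A=V|A|$, with $V\in\mathbb{B}(K,H)$ a partial isometry, initial space $\overline{\mathcal{R}(|A|)}$ and final space $\overline{\mathcal{R}(A)}=\overline{\mathcal{R}(|A^*|)}$. We reduce $T$ to a block operator whose entries are functions of $|A|$, and then transport it to $\widetilde T$ through $V$. First we note the basic intertwining relations
\[
V|A|V^*=|A^*|,\qquad A^*=|A|V^*=V^*|A^*|,\qquad V\psi(|A|)V^*=\psi(|A^*|)VV^*
\]
for every continuous $\psi$ on $\mathbb{R}_+$. The last one follows by checking polynomials and passing to uniform limits. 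Set $H_1=\overline{\mathcal{R}(A)}$, $H_0=\mathcal{N}(A^*)$, $K_1=\overline{\mathcal{R}(A^*)}$ and $K_0=\mathcal{N}(A)$. All four blocks of $T$ respect the decomposition
\[
H\oplus K=(H_1\oplus K_1)\oplus(H_0\oplus K_0).
\]
Indeed, $\varphi(|A|)$ commutes with the projection onto $K_0$, and on $K_0$ we have $\varphi(|A|)=\varphi(0)I$. Hence $T=T_1\oplus T_0$, where
\[
T_0=\begin{pmatrix} aI_{H_0}&0\\0&b\varphi(0)I_{K_0}\end{pmatrix}
\]
and $T_1$ is the compression of $T$ to $H_1\oplus K_1$. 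In the same way, $|A^*|$ vanishes on $H_0$, so $\widetilde T=\widetilde T_1\oplus\widetilde T_0$. Here $\widetilde T_0$ acts on $H_0\oplus H_0$ as $\begin{pmatrix} a&0\\0&b\varphi(0)\end{pmatrix}\otimes I$, and $\widetilde T_1$ is the compression of $\widetilde T$ to $H_1\oplus H_1$.

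Next, the unitary $W=I_{H_1}\oplus V|_{K_1}\colon H_1\oplus K_1\to H_1\oplus H_1$ satisfies $WT_1W^*=\widetilde T_1$, by the intertwining relations above. Therefore $\|T_1\|=\|\widetilde T_1\|$, and $T_1$ attains its norm if and only if $\widetilde T_1$ does. For a direct sum, $\|X\oplus Y\|=\max\{\|X\|,\|Y\|\}$, and $X\oplus Y$ attains its norm if and only if the summand of larger norm attains it (if the norms are equal, either one suffices). The summands $T_0$ and $\widetilde T_0$ are diagonal with scalar entries, so each attains its norm $\max\{|a|,|b|\varphi(0)\}$ whenever the underlying space is nonzero.

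The main obstacle is that the zero parts may not match: $H_0$ and $K_0$ may have different dimensions, and one of $H_0$, $K_0$ may be $\{0\}$ while $\widetilde T_0$ lives on $H_0\oplus H_0$. To handle this, we show that $\|T_0\|$ and $\|\widetilde T_0\|$ are always dominated by $\|T_1\|$ whenever they could differ. This is where the hypothesis $\varphi(t)\ge\varphi(0)\ge 0$ enters.

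We split into cases.
\begin{itemize}
\item If $A=0$, then $H_1=K_1=\{0\}$, $T=aI_H\oplus b\varphi(0)I_K$ and $\widetilde T=aI_H\oplus b\varphi(0)I_H$. Both have norm $\max\{|a|,|b|\varphi(0)\}$ (recall $H,K\neq\{0\}$), and both attain it.
\item If $A\neq 0$, we claim $\|T_1\|\ge\max\{|a|,|b|\varphi(0)\}$. For the lower bound by $|a|$, take a unit vector $x\in H_1$ and compute $\|T_1(x,0)\|\ge|a|$. For the lower bound by $|b|\varphi(0)$, take a unit vector $y\in K_1$: then $\|T_1(0,y)\|^2\ge|b|^2\|\varphi(|A|)y\|^2$. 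Since $\varphi(|A|)\ge\varphi(0)I\ge 0$ as positive operators, $\|\varphi(|A|)y\|\ge\varphi(0)$.
\end{itemize}
In the second case we conclude $\|T\|=\|T_1\|=\|\widetilde T_1\|=\|\widetilde T\|$.

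It remains to compare attainment when $A\neq 0$. If $\|T_1\|>\max\{|a|,|b|\varphi(0)\}$, then $T$ attains its norm if and only if $T_1$ does, which holds if and only if $\widetilde T_1$ does, which holds if and only if $\widetilde T$ does. If instead equality $\|T_1\|=\max\{|a|,|b|\varphi(0)\}$ holds, we show that both $T$ and $\widetilde T$ attain their norms, so the equivalence is again immediate.

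Suppose the maximum is $|a|$. The vector $(x,0)$ with $x\in H_1$ is an attaining vector for $T_1$ and for $\widetilde T_1$ whenever $\|T_1\|=|a|$. Indeed, $\|T_1(x,0)\|^2=|a|^2+|c|^2\|A^*x\|^2$, which equals $|a|^2$ only if $c=0$ or $A^*x=0$; otherwise it exceeds $|a|^2=\|T_1\|^2$, contradicting maximality. So in this boundary case $c=0$, and any $(x,0)$ attains the norm for both.

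Suppose instead the maximum is $|b|\varphi(0)$, realized by $(0,y)$. Then
\[
\|T_1(0,y)\|^2=\|Ay\|^2+|b|^2\|\varphi(|A|)y\|^2\ge\|Ay\|^2+|b|^2\varphi(0)^2 .
\]
Since $\|Ay\|>0$ for $y\in K_1$, this exceeds $|b|^2\varphi(0)^2$ strictly, which rules out the equality $\|T_1\|=|b|\varphi(0)$ unless $\|T_1\|=|a|$ as well, a case already covered.

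Carrying out this boundary-case bookkeeping carefully, possibly with small variations in the choice of test vectors, completes the proof.
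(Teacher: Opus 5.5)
Your proof is correct, but it is built on a different decomposition than the paper's.

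\textbf{Your route.} You reduce by the full kernel part, $H\oplus K=(H_1\oplus K_1)\oplus(H_0\oplus K_0)$ with $H_0=\mathcal{N}(A^*)$ and $K_0=\mathcal{N}(A)$. Then $T=T_1\oplus T_0$ and $\widetilde T=\widetilde T_1\oplus\widetilde T_0$ are genuine direct sums, and $T_1$, $\widetilde T_1$ are unitarily equivalent via $I_{H_1}\oplus V|_{K_1}$.

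\textbf{The paper's route.} The paper keeps the first coordinate $H$ whole and splits off only $Z=\operatorname{diag}\bigl(0,b\varphi(0)(I_K-V^*V)\bigr)$, writing $T=Y+Z$ with $Y^*Z=YZ^*=0$. It then relates $Y$ to $X=UTU^*$ through the partial isometry $U=\operatorname{diag}(I_H,V)$. This requires the contraction chain $\|T\|=\|Y\|\le\|X\|\le\|T\|$ and a chain of attainment implications $T\Rightarrow Y\Rightarrow X\Rightarrow\widetilde T$ (and back), in place of your one-line unitary equivalence.

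\textbf{Trade-offs.} The price of your finer splitting is that the block $aI$ on $\mathcal{N}(A^*)$ lands in $T_0$ and $\widetilde T_0$ and can tie with $\|T_1\|$. So you need the extra boundary case $\|T_1\|=|a|$. In the paper, $aI_H$ stays inside $Y$ and $X$, the only competitor is $\|Z\|\le|b|\varphi(0)$, and the strict inequality $\|Y\|>\|Z\|$ removes every tie. Conversely, your proof of the analogous strict inequality $\|T_1\|>|b|\varphi(0)$ is more elementary than the paper's. You use a single unit vector $y\in\overline{\mathcal{R}(A^*)}$ with $Ay\neq0$, whereas the paper evaluates $\|A^*A+|b\varphi(0)|^2V^*V\|$ in a commutative $C^*$-algebra.

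\textbf{Minor points of presentation.}
\begin{itemize}
\item The strict inequality $\|T_1\|>|b|\varphi(0)$ holds unconditionally once $A\neq0$, so the clause ``unless $\|T_1\|=|a|$ as well'' is superfluous. The only possible tie is $\|T_1\|=|a|>|b|\varphi(0)$.
\item In that tie case you need not deduce $c=0$. The inequalities $\|T_1(x,0)\|\ge|a|=\|T_1\|$ and $\|\widetilde T_1(x,0)\|\ge|a|=\|\widetilde T_1\|$ already give attainment for both.
\item $\|T_0\|$ equals $\max\{|a|,|b|\varphi(0)\}$ only when $H_0$ and $K_0$ are both nonzero. However, the upper bound is all you actually use.
\end{itemize}
With these observations your case analysis is complete as written, and the closing appeal to further ``bookkeeping'' is unnecessary.
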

\begin{proof}If $A=0$, then the operators $T$ and $\widetilde{T}$ defined above reduce to the diagonal operators $\mbox{diag}(a I_H, b\varphi(0)I_H)$  and $\mbox{diag}(a I_H, b\varphi(0)I_K)$, respectively. Since $H$ and $K$ are nontrivial, the result follows immediately.

Now assume $A \neq 0$.   Define $g(t) = \varphi(t) - \varphi(0)$ for $t\in\mathbb{R}_+$. By hypothesis, $g(t) \geq 0$ for all $t \in \mathbb{R}_+$ and $g(0) = 0$. Consequently,
    \begin{equation*}
        b\varphi(|A|) = b\big[\varphi(0)I_{K} + g(|A|)\big] \quad \text{and} \quad
        b\varphi(|A^*|) = b\big[\varphi(0)I_{H} + g(|A^*|)\big].
    \end{equation*}
    Let $A = V|A|$ be the polar decomposition of $A$. Define the partial isometry $U = \operatorname{diag}(I_{H}, V)\in\mathbb{B}(H\oplus K,H\oplus H)$, and consider the operators
    \begin{equation*}
        X = U T U^*\in\mathbb{B}(H\oplus H) \quad \text{and} \quad Y = U^* X U\in\mathbb{B}(H\oplus K).
    \end{equation*}
    Since $g$ is continuous and vanishes at $0$, we have
    $V g(|A|) V^* = g(|A^*|)$ and $V^* g(|A^*|) V = g(|A|)$.  It follows that
    \begin{align*}
        &X= \begin{pmatrix} aI_{H} & |A^*| \\ c|A^*| & X_{22} \end{pmatrix}, \quad Y=\begin{pmatrix} aI_{H} & A \\ cA^* & Y_{22}\end{pmatrix}=T-Z,
                    \end{align*}
    where
    \begin{align*}&X_{22}=b\big[\varphi(0)VV^*+g(|A^*|)\big],\quad Y_{22}=b\big[\varphi(0)V^*V+g(|A|)\big],\\
    &Z=\begin{pmatrix} 0 & 0\\ 0 & b\varphi(0)(I_K-V^*V)\end{pmatrix}.
    \end{align*}

Observe that
$Y^*Z=0$ and  $YZ^*=0$. Therefore,
$$\|T\|^2=\|(Y-Z)^*(Y-Z)\|=\|Y^*Y+Z^*Z\|=\max\{\|Y\|^2,\|Z\|^2\},$$
which implies
\begin{equation}\label{norms T Y Z}\|T\|=\max\{\|Y\|,\|Z\|\}.
\end{equation}
Since $\varphi(0)V^*V$ and $g(|A|)$ are positive operators, we estimate
    \begin{align*}
        \|Y\| &\geq \|Y_{22}\|=|b|\cdot\big\|\varphi(0)V^*V + g(|A|)\big\|\\
         &\geq |b|\cdot \big\|\varphi(0) V^*V\big\|= |b\varphi(0)|\ge \|Z\|.
    \end{align*}
Hence, by \eqref{norms T Y Z} we obtain $\|T\|=\|Y\|$. Similar reasoning yields $\|\widetilde{T}\|=\|X\|$.
In addition, as $U$ is a contraction, the definitions of $X$ and $Y$ imply
$$\|T\|=\|Y\|\le \|X\|\le \|T\|.
$$
Thus, the norms of $T, X,Y$ and $\widetilde{T}$ are all equal.

    We now turn to the equivalence of norm attainment.  First, we derive a strict inequality. Observe that the lower-right block of $Y^*Y$ is given by $A^*A + Y_{22}^*Y_{22}$. Since $A \neq 0$, we have $\|V^*V\|=1$, and $V^*V$ acts as the unit within the commutative $C^*$-algebra generated by $A^*A$ and $V^*V$. Consequently,
    $$\big\|A^*A+|b\varphi(0)V^*V|^2\big\|=\|A\|^2+|b|^2\varphi^2(0)>|b|^2\varphi^2(0).$$
  It follows that
     \begin{equation*}
        \|Y\|^2 \geq \|A^*A + Y_{22}^*Y_{22}\|>|b|^2 \varphi^2(0)\geq \|Z\|^2,
    \end{equation*}
    establishing the strict inequality $\|Y\| > \|Z\|$.

Next, we show that $T$ attains its norm if and only if $Y$ attains its norm. For brevity, let
$P=U^*U$ and write $I=I_{H\oplus K}$. Clearly,
$YP=Y=PY$ and $ZP=PZ=0$.
So, for any $\xi\in H\oplus K$ with $(I-P)\xi\ne 0$,
\begin{align*}\|T\xi\|^2=&\|(Y+Z)\xi\|^2=\|YP\xi\|^2+\|Z(I-P)\xi\|^2\\
\le& \|Y\|^2\cdot \|P\xi\|^2+\| Z\|^2\cdot \|(I-P)\xi\|^2\\
<&\|Y\|^2\cdot \|P\xi\|^2+\|Y\|^2\cdot \|(I-P)\xi\|^2=\|T\|^2\cdot \|\xi\|^2.
\end{align*}
Hence, if $T$ attains its norm at a unit vector $x$, then $x=Px$ and thus
$\|Yx\|=\|Tx\|=\|T\|=\|Y\|$, so $Y$ also attains its norm at $x$.
Conversely, suppose $x$ is unit vector such that $\|Yx\|=\|Y\|$. Then
$$\|Y\|=\|Yx\|=\|YPx\|\le \|Y\|\cdot \|Px\|\le \|Y\|\cdot \|x\|=\|Y\|.$$
This forces $\|Px\|=\|x\|$, implying $x=Px$. Consequently, $\|Tx\|=\|Yx\|=\|Y\|=\|T\|$, showing that $T$ attains its norm.
Analogously, we conclude that $\widetilde{T}$ attains its norm if and only if $X$ attains its norm.

Finally, we establish the desired equivalence. Suppose $\widetilde{T}$ attains its norm. Then there exists a unit vector $\xi$  such that $\|X\xi\|=\|X\|=\|T\|$. Consequently,
\begin{align*}\|T\|=\|X\xi\|\le \|TU^*\xi\|\le \|T\|\cdot \|U^*\xi\|\le \|T\|.
\end{align*}
This forces $\|U^*\xi\|=1$ and $\|TU^*\xi\|=\|T\|$; hence $T$ attains its norm. Similarly, from the expression of $Y=U^* X U$, we conclude the chain
of implications: $T$ attains its norm $\Longrightarrow$ $Y$ attains its norm $\Longrightarrow$ $X$ attains its norm $\Longrightarrow$ $\widetilde{T}$ attains its norm. This completes the proof.
\end{proof}

\begin{remark}
    Let $T$ and $\widetilde{T}$ be as defined in \eqref{def of T}.
    Observe that $\widetilde{T}$ belongs to the matrix algebra $M_2(\mathfrak{B})$, where
    $\mathfrak{B}$ denotes the $C^*$-subalgebra of $\mathbb{B}(H)$ generated by $I_H$ and $|A^*|$.
    Since $M_2(\mathbb{C})$ is nuclear \cite[Theorem~6.3.9 and 6.4.15]{Murphy} and
    $M_2(\mathfrak{B}) \cong M_2(\mathbb{C}) \otimes \mathfrak{B}$, Lemma~\ref{lem:norm_equivalence} yields
    \begin{equation}\label{equ:norm of T tilde}
        \|T\| = \|\widetilde{T}\| = \max_{t \in \sigma(|A^*|)} \|M_t\|,
    \end{equation}
    where $M_t$ is as defined in \eqref{defn of M t}. We then define
    \begin{equation}\label{defn of Omega}
        \Omega = \bigl\{ t \in \sigma(|A^*|) : \|M_t\| = \|T\| \bigr\}.
    \end{equation}
\end{remark}

To proceed, we establish a lemma that is also of independent interest.

\begin{lemma}\label{lem:kernel_equality}
    Let $A \in \mathbb{B}(H)$ be self-adjoint.
    Suppose $t_0 \in \sigma(A)$ and $\varphi \in C\big(\sigma(A)\big)$ satisfies
    $\mathcal{N}(\varphi) = \{t_0\}$.
    Then $\mathcal{N}\big(\varphi(A)\big) = \mathcal{N}(A - t_0 I_H)$.
\end{lemma}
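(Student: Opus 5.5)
We prove the two inclusions separately, using the continuous functional calculus for the self-adjoint operator $A$ together with the spectral theorem.

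\textbf{The inclusion $\mathcal{N}(A-t_0I_H)\subseteq\mathcal{N}(\varphi(A))$.} Let $x\in\mathcal{N}(A-t_0I_H)$. Then $p(A)x=p(t_0)x$ for every polynomial $p$. Since $\sigma(A)$ is a compact subset of $\mathbb{R}$, the Weierstrass theorem gives polynomials $p_n$ converging to $\varphi$ uniformly on $\sigma(A)$. Passing to the limit yields $\varphi(A)x=\varphi(t_0)x=0$, because $\varphi(t_0)=0$.

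\textbf{The reverse inclusion.} I would work with the spectral measure $E$ of $A$. For a vector $x$, let $\mu_x(\cdot)=\langle E(\cdot)x,x\rangle$, a finite positive measure supported on $\sigma(A)$. If $\varphi(A)x=0$, then
$$\int_{\sigma(A)}|\varphi(t)|^2\,d\mu_x(t)=\|\varphi(A)x\|^2=0.$$
Hence $\varphi=0$ holds $\mu_x$-almost everywhere. Since $\mathcal{N}(\varphi)=\{t_0\}$, the measure $\mu_x$ is concentrated on $\{t_0\}$, that is, $\mu_x(\sigma(A)\setminus\{t_0\})=0$. It follows that
$$\|(A-t_0I_H)x\|^2=\int_{\sigma(A)}|t-t_0|^2\,d\mu_x(t)=0,$$
because the integrand vanishes at $t_0$. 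Therefore $x\in\mathcal{N}(A-t_0I_H)$.

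\textbf{Alternative without spectral measures.} One can stay inside $C^*(A)$ instead. For $\varepsilon>0$, the set $\{t\in\sigma(A):|t-t_0|\ge\varepsilon\}$ is compact, so $|\varphi|\ge\delta>0$ on it by continuity. Set $\psi_\varepsilon(t)=\min\{|t-t_0|,\varepsilon\}$. Using $|t-t_0|\le\psi_\varepsilon(t)+M|\varphi(t)|$ on $\sigma(A)$, where $M=\operatorname{diam}\sigma(A)/\delta$, and the positivity of the functional calculus, one obtains
$$\|(A-t_0I_H)x\|^2\le 2\varepsilon^2\|x\|^2+2M^2\|\varphi(A)x\|^2.$$
For $x\in\mathcal{N}(\varphi(A))$ this gives $\|(A-t_0I_H)x\|\le\sqrt2\,\varepsilon\|x\|$, and letting $\varepsilon\to0$ finishes the argument.

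\textbf{Main obstacle.} The only delicate step is the reverse inclusion, because $\varphi$ may vanish to arbitrarily high order at $t_0$. This rules out any factorisation of the form $\varphi(t)=(t-t_0)k(t)$ with $k$ continuous and nonvanishing. The spectral-measure argument bypasses this issue, since it only uses the fact that $\mathcal{N}(\varphi)=\{t_0\}$ pointwise.
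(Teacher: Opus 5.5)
Your proof is correct, but it takes a different route from the paper.

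\textbf{The paper's route.} The paper never uses spectral measures. It works in $C\big(\sigma(A)\big)$ and cites the hull--kernel description of closed ideals: every closed ideal is the set of functions vanishing on some closed set $F$. Applied to the closed ideal generated by $\varphi$, this gives $F=\{t_0\}$, so the ideal is exactly $\{h : h(t_0)=0\}$. Hence $t-t_0$ is a uniform limit of multiples $u_n\varphi$, which gives $\mathcal{N}(\varphi(A))\subseteq\mathcal{N}(A-t_0I_H)$. The same argument with the roles of $\varphi$ and $t-t_0$ swapped gives the other inclusion. This is symmetric and uses only the continuous functional calculus, but it relies on a cited structure theorem.

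\textbf{Your main route.} Your spectral-measure argument uses the Borel functional calculus and shows directly that $\mu_x$ is a point mass at $t_0$. It is shorter, and it actually identifies $\mathcal{N}(\varphi(A))$ as the range of $E(\{t_0\})$ (more generally, of $E(\varphi^{-1}(0))$). The cost is heavier machinery.

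\textbf{Your alternative.} This is the closest in spirit to the paper. The pointwise bound $|t-t_0|\le\psi_\varepsilon(t)+M|\varphi(t)|$, combined with positivity of the functional calculus, is a hands-on, quantitative substitute for the fact that $t-t_0$ lies in the closed ideal generated by $\varphi$. It removes the need to cite the ideal theorem while staying inside $C^*(A)$.

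All three routes handle the issue you flagged, that $\varphi$ may vanish to arbitrarily high order at $t_0$, by approximation rather than exact factorisation.
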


\begin{proof}
    Let $\varphi_1, \varphi_2 \in C\big(\sigma(A)\big)$ be such that
    $\mathcal{N}(\varphi_1) = \mathcal{N}(\varphi_2) = \{t_0\}$.
    Let $\mathcal{J}$ be the closed two-sided ideal of $C\big(\sigma(A)\big)$ generated by $\varphi_1$, i.e.,
    $$
    \mathcal{J} = \overline{\{u \varphi_1 : u \in C\big(\sigma(A)\big)\}}.
    $$
    Since $\sigma(A)$ is a compact Hausdorff space, by \cite[Theorem~3.4.1]{KR} there exists a unique
    closed subset $F \subseteq \sigma(A)$ such that
    $$
    \mathcal{J} = \big\{h \in C\big(\sigma(A)\big) : h(t) = 0 \text{ for all } t \in F\big\}.
    $$
    As $\varphi_1 \in \mathcal{J}$ and $\mathcal{N}(\varphi_1) = \{t_0\}$, it follows that $F = \{t_0\}$.
    Thus, $\varphi_2 \in \mathcal{J}$. Consequently, there exists a sequence $\{u_n\}_{n=1}^\infty$ in $C\big(\sigma(A)\big)$ such that
    $u_n \varphi_1 \to \varphi_2$ uniformly on $\sigma(A)$. Therefore, for any $x \in \mathcal{N}\big(\varphi_1(A)\big)$,
    \begin{equation*}
        \varphi_2(A)x = \lim_{n\to\infty} u_n(A)\varphi_1(A)x = 0,
    \end{equation*}
    yielding $\mathcal{N}\big(\varphi_1(A)\big) \subseteq \mathcal{N}\big(\varphi_2(A)\big)$.
    The reverse inclusion follows symmetrically, establishing
    $\mathcal{N}\big(\varphi_1(A)\big) = \mathcal{N}\big(\varphi_2(A)\big)$.

    The result follows immediately by letting $\varphi_1 = \varphi$ and $\varphi_2(t) = t - t_0$ for $t \in \sigma(A)$.
\end{proof}

We next prove two lemmas concerning the connection between the norm attainment of $T$ and the point spectrum of $|A^*|$.

\begin{lemma}\label{lem:norm_attainment_necessity}
    Let $\varphi \colon \mathbb{R}_+ \to \mathbb{R}$ be continuous and satisfy $\varphi(t) \geq \varphi(0) \geq 0$ for all $t \in \mathbb{R}_+$.
    Given $A \in \mathbb{B}(K, H)$ and $a, b, c \in \mathbb{C}$, let $M_t$ and $T$ be as defined in \eqref{defn of M t} and \eqref{def of T}, respectively.
    Suppose further that the set $\Omega$ defined by \eqref{defn of Omega} is a singleton $\{t_0\}$ and that $T$ attains its norm.
    Then $t_0 \in \sigma_p(|A^*|)$.
\end{lemma}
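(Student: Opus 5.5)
By Lemma~\ref{lem:norm_equivalence}, $T$ attains its norm if and only if $\widetilde{T}$ does, and $\|T\|=\|\widetilde{T}\|$. So we may assume there is a unit vector $\xi=(x,y)\in H\oplus H$ with $\|\widetilde{T}\xi\|=\|\widetilde{T}\|$. Put $B=|A^*|$. The plan is to show that $\xi$ lies in the spectral subspace $\mathcal{N}(B-t_0I_H)\oplus\mathcal{N}(B-t_0I_H)$. Since $\xi\neq 0$, this forces $\mathcal{N}(B-t_0I_H)\neq\{0\}$, i.e.\ $t_0\in\sigma_p(B)$.

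First, the positive operator $\|\widetilde T\|^2 I-\widetilde{T}^*\widetilde{T}$ annihilates $\xi$, because $\langle(\|\widetilde T\|^2-\widetilde T^*\widetilde T)\xi,\xi\rangle=0$. All entries of $\widetilde{T}^*\widetilde{T}$ lie in the commutative $C^*$-algebra $\mathfrak{B}=C^*(I_H,B)\cong C(\sigma(B))$. Hence $W:=\|\widetilde T\|^2 I-\widetilde T^*\widetilde T$ is the element of $M_2(C(\sigma(B)))$ given by $t\mapsto W_t:=\|T\|^2I_2-M_t^*M_t$, which is positive semidefinite for every $t\in\sigma(B)$ by \eqref{equ:norm of T tilde}.

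Next, I will turn the condition $W\xi=0$ into a scalar kernel condition. Let $\psi(t)=\operatorname{tr}(W_t)$ and $\delta(t)=\det(W_t)$; both are continuous and nonnegative on $\sigma(B)$. Since $W\ge 0$ and $W\xi=0$, we get $\langle W\xi,\xi\rangle=0$. Then the scalar operator $D=\delta(B)$ satisfies $\operatorname{diag}(D,D)=\operatorname{adj}(W)\,W$ (adjugate formula, valid by commutativity), so $\operatorname{diag}(D,D)\xi=0$, i.e.\ $Dx=Dy=0$.

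By the hypothesis $\Omega=\{t_0\}$, the largest eigenvalue of $M_t^*M_t$ equals $\|T\|^2$ only at $t=t_0$. Hence $\delta(t)=0$ exactly when $\|M_t\|=\|T\|$, that is, exactly when $t=t_0$. So $\mathcal{N}(\delta)=\{t_0\}$, and Lemma~\ref{lem:kernel_equality} applied to the self-adjoint operator $B$ gives $\mathcal{N}(\delta(B))=\mathcal{N}(B-t_0I_H)$. Therefore $x,y\in\mathcal{N}(B-t_0I_H)$, with at least one nonzero, which completes the argument.

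The main obstacle is justifying $\operatorname{adj}(W)W=\det(W)I$ at the operator level. This should be routine: entrywise it is the polynomial identity in commuting entries, transported through the functional calculus isomorphism $\mathfrak{B}\cong C(\sigma(B))$. A second point to check is that $\delta$ vanishes only at $t_0$. This uses $W_t\ge0$: the smaller eigenvalue of $W_t$ is $\|T\|^2-\|M_t\|^2\ge 0$, and it is zero precisely on $\Omega$. A zero determinant could also arise from the other eigenvalue $\|T\|^2-\lambda_{\min}(M_t^*M_t)$ vanishing, but that would force $\lambda_{\min}=\|T\|^2$, hence $\|M_t\|=\|T\|$ again, so $t\in\Omega$ regardless.
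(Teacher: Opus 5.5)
Your proposal is correct and follows essentially the paper's proof. The paper also takes a vector in the kernel of $\|T\|^2I-T^*T$ and eliminates one component, which is your adjugate identity written out row by row. This gives $\Phi(A)u=\Phi(A)v=0$ with $\Phi(t)=\det(\|T\|^2I_2-M_t^*M_t)$; the paper then shows that $\Phi$ vanishes on the spectrum only at $t_0$ and concludes with Lemma~\ref{lem:kernel_equality}. The only difference is cosmetic: you work directly with $\widetilde{T}$ and $B=|A^*|$ rather than first reducing to $H=K$ and $A\ge 0$.
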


\begin{proof}
    If $A = 0$, then $\Omega = \{0\}$ and $0 \in \sigma_p(|A^*|)$, so the conclusion holds.
    Henceforth, assume $A \neq 0$.

    By Lemma~\ref{lem:norm_equivalence}, $T$ attains its norm if and only if $\widetilde{T}$ does.
    Since $\widetilde{T}$ depends only on the positive operator $|A^*|$, Lemma~\ref{lem:attainment-observation} allows us to assume without loss of generality that $H = K$ and $A \geq 0$.

    Assume $T$ attains its norm and let $\lambda_0 = \|T\|^2$.
    By Lemma~\ref{lem:attainment-observation}, there exists a non-zero vector $(u, v)^\top \in H \oplus H$ such that
    \begin{align*}
        T^*T \begin{pmatrix} u \\ v \end{pmatrix} = \lambda_0 \begin{pmatrix} u \\ v \end{pmatrix}.
    \end{align*}
    Writing this out explicitly using the block matrix form of $T^*T$ (with coefficients defined in \eqref{equ:exp of a11, a22, h}) yields the system:
    \begin{equation}\label{equ:T_attaining_system}
        \begin{cases}
            \bigl[\lambda_0 I - a_{11}(A)\bigr]u = A\bigl(h(A)\bigr)^*v, \\[4pt]
            \bigl[\lambda_0 I - a_{22}(A)\bigr]v = Ah(A)u.
        \end{cases}
    \end{equation}

    To eliminate $v$, multiply the first equation on the left by $\lambda_0 I - a_{22}(A)$:
    \begin{align*}
        \bigl[\lambda_0 I - a_{22}(A)\bigr]\bigl[\lambda_0 I - a_{11}(A)\bigr]u
        &= \bigl[\lambda_0 I - a_{22}(A)\bigr]A\bigl(h(A)\bigr)^*v \\
        &= A\bigl(h(A)\bigr)^*\bigl[\lambda_0 I - a_{22}(A)\bigr]v.
    \end{align*}
    Substituting the second equation of \eqref{equ:T_attaining_system} into the right-hand side gives
    \begin{align*}
        \bigl[\lambda_0 I - a_{22}(A)\bigr]\bigl[\lambda_0 I - a_{11}(A)\bigr]u
        &= A\bigl(h(A)\bigr)^* Ah(A)u \\
        &= A^2 |h(A)|^2 u.
    \end{align*}

    Rearranging terms, we find that $u$ lies in the null space  of $\Phi(A)$, where $\Phi \colon \mathbb{R}_+ \to \mathbb{R}$ is the continuous function defined by $$\Phi(t)=\bigl[\lambda_0 - a_{11}(t)\bigr]\bigl[\lambda_0 - a_{22}(t)\bigr] - t^2|h(t)|^2,$$ which can be simplified as
    \begin{equation}\label{defn of Phi}
        \Phi(t)= \det(\lambda_0 I_2 - M_t^*M_t).
    \end{equation}
    Symmetric reasoning applied to the second equation shows that $\Phi(A)v = 0$.

    Now, consider any $t \in \sigma(A)$. If $\Phi(t) = 0$, then $\lambda_0$ is an eigenvalue of $M_t^*M_t$ and thus $\lambda_0 \le \|M_t^*M_t\| = \|M_t\|^2$. Combining this with \eqref{equ:norm of T tilde}  yields
    \[
        \|T\|^2 = \lambda_0 \le \|M_t\|^2 \le \|T\|^2.
    \]
    Hence, equality holds throughout, implying $\|M_t\| = \|T\|$; that is, $t \in \Omega$. By hypothesis, $\Omega = \{t_0\}$, so $t = t_0$. Conversely, since $\lambda_0 = \|T\|^2 = \|M_{t_0}\|^2 = \|M_{t_0}^*M_{t_0}\|$, the positivity of $M_{t_0}^*M_{t_0}$ ensures $\lambda_0$ is its largest eigenvalue, so $\det(\lambda_0 I_2 - M_{t_0}^*M_{t_0}) = 0$. In view of \eqref{defn of Phi}, this means $\Phi(t_0) = 0$.

    We have thus shown that $\mathcal{N}(\Phi) \cap \sigma(A) = \{t_0\}$. By Lemma~\ref{lem:kernel_equality}, it follows that $\mathcal{N}\big(\Phi(A)\big) = \mathcal{N}(A - t_0 I_H)$. Consequently, $Au = t_0 u$ and $Av = t_0 v$. As $(u, v)^\top \neq 0$, the vector $u$ or $v$ is non-zero, proving that $t_0$ is an eigenvalue of $A$.
\end{proof}

\begin{lemma}\label{from A to T} Let $\varphi \colon \mathbb{R}_+ \to \mathbb{R}$ be continuous and satisfy $\varphi(t) \geq \varphi(0) \geq 0$ for all $t \in \mathbb{R}_+$.
    Given $A \in \mathbb{B}(K, H)$ and $a, b, c \in \mathbb{C}$, let $M_t$, $T$  and $\Omega$ be as defined in \eqref{defn of M t}, \eqref{def of T}  and \eqref{defn of Omega}, respectively.
    Suppose there exists $t_0\in \Omega\cap \sigma_p(|A^*|)$. Then $T$ attains its norm.
\end{lemma}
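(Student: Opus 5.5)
By Lemma~\ref{lem:norm_equivalence}, $T$ attains its norm if and only if $\widetilde{T}$ does, and $\|T\|=\|\widetilde{T}\|$. So it suffices to produce a unit vector in $H\oplus H$ at which $\widetilde{T}$ attains its norm.

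Since $t_0\in\sigma_p(|A^*|)$, fix a unit vector $x_0\in H$ with $|A^*|x_0=t_0x_0$. By continuous functional calculus, $\varphi(|A^*|)x_0=\varphi(t_0)x_0$. Hence the two-dimensional subspace
\[
\mathcal{L}=\{(\alpha x_0,\beta x_0)^\top:\alpha,\beta\in\mathbb{C}\}
\]
is invariant under $\widetilde{T}$. On it,
\[
\widetilde{T}\begin{pmatrix}\alpha x_0\\ \beta x_0\end{pmatrix}=\begin{pmatrix}(a\alpha+t_0\beta)x_0\\ (ct_0\alpha+b\varphi(t_0)\beta)x_0\end{pmatrix},
\]
so the restriction is unitarily equivalent to $M_{t_0}$ acting on $\mathbb{C}^2$, via $(\alpha,\beta)\mapsto(\alpha x_0,\beta x_0)$, which is an isometry because $\|x_0\|=1$.

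Choose a unit vector $(\alpha,\beta)^\top\in\mathbb{C}^2$ with $\|M_{t_0}(\alpha,\beta)^\top\|=\|M_{t_0}\|$; this exists since we are in finite dimensions. Then $\xi=(\alpha x_0,\beta x_0)^\top$ is a unit vector, and
\[
\|\widetilde{T}\xi\|=\|M_{t_0}\|=\|T\|=\|\widetilde{T}\|,
\]
where the middle equality is exactly $t_0\in\Omega$, using \eqref{equ:norm of T tilde}. So $\widetilde{T}$ attains its norm, and therefore so does $T$.

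The only point needing care is that the hypotheses of Lemma~\ref{lem:norm_equivalence} and formula \eqref{equ:norm of T tilde} are in force. They are, since $\varphi(t)\ge\varphi(0)\ge0$ is assumed. The case $A=0$ is also covered: then $\sigma(|A^*|)=\{0\}$ and every vector of $H$ is an eigenvector, so the same argument applies. I do not expect any real obstacle; the main step is the invariant-subspace reduction to $M_{t_0}$.
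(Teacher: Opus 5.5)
Your proposal is correct and follows essentially the same route as the paper: reduce to $\widetilde{T}$ via Lemma~\ref{lem:norm_equivalence}, take a unit eigenvector $v$ of $|A^*|$ for $t_0$, lift a norming unit vector $(\alpha,\beta)^\top$ of $M_{t_0}$ to $(\alpha v,\beta v)^\top$, and use $\|M_{t_0}\|=\|T\|=\|\widetilde{T}\|$. Your invariant-subspace phrasing and the explicit check of the case $A=0$ are just minor presentational additions to the same argument.
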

\begin{proof}From Lemma~\ref{lem:norm_equivalence}, it suffices to verify that $\widetilde{T}$ attains its norm.  By assumption, $\|M_{t_0}\|=\|T\|$ and there exists a unit vector $v \in H$ such that $|A^*|v = t_0v$, which guarantees
    $$g(|A^*|)v=g(t_0)v,\quad \forall g\in C\big(\sigma(|A^*|)\big).$$
 Since the matrix $M_{\scriptscriptstyle t_0}$ acts on the finite-dimensional space $\mathbb{C}^2$, it  evidently attains its norm. Let $\xi=(\alpha,\beta)^T\in \mathbb{C}^2$ be a unit vector such that $\left\| M_{\scriptscriptstyle t_0} \xi \right\| = \|T\|$.

Define the unit vector $x = (\alpha v,\beta v)^T\in H \oplus H$. Due to $\varphi(|A^*|)v = \varphi(t_0)v$, we compute
    \begin{align*}
       \widetilde{T}x = \begin{pmatrix}
            a\alpha v + t_0\beta v \\
            c t_0\alpha v + b\varphi(t_0)\beta v
        \end{pmatrix}.
    \end{align*}
    Taking the squared norm and using  $\|v\|=1$  yields
    \begin{align*}
        \big\|\widetilde{T}x\big\|^2 &= \Big| a\alpha + t_0\beta\Big|^2 + \Big|c t_0\alpha + b \varphi(t_0)\beta\Big|^2\\
         &=\left\| M_{\scriptscriptstyle t_0} \xi \right\|^2 = \|T\|^2=\big\|\widetilde{T}\big\|^2,
    \end{align*}
     confirming that $\widetilde{T}$ attains its norm.
    \end{proof}

We are now ready to present the main result of this section.

\begin{theorem}\label{thm:norm_attainment_equivalence}
    Let $\varphi \colon \mathbb{R}_+ \to \mathbb{R}$ be a continuous function satisfying $\varphi(t) \geq \varphi(0) \geq 0$ for all $t \in \mathbb{R}_+$.
    Given $A \in \mathbb{B}(K, H)$ and $a, b, c \in \mathbb{C}$, let $M_t$ and $T$ be defined as in \eqref{defn of M t} and \eqref{def of T}, respectively.
    If the mapping $t \mapsto \|M_t\|$ is strictly increasing on $\mathbb{R}_+$, then the following statements are equivalent:
    \begin{enumerate}
        \item[\upshape (i)] $A$ attains its norm;
        \item[\upshape (ii)] $T$ attains its norm.
    \end{enumerate}
\end{theorem}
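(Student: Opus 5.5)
The plan is to reduce both directions to the single point $t_0=\|A\|=\|A^*\|=\max\sigma(|A^*|)$, using the strict monotonicity of $t\mapsto\|M_t\|$ to identify the set $\Omega$ of \eqref{defn of Omega}, and then to invoke Lemmas~\ref{lem:norm_attainment_necessity} and \ref{from A to T}.

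\textbf{Step 1: identify $\Omega$.} The spectrum $\sigma(|A^*|)$ is a compact subset of $\mathbb{R}_+$ whose maximum is $\||A^*|\|=\|A^*\|=\|A\|$. By \eqref{equ:norm of T tilde}, $\|T\|=\max_{t\in\sigma(|A^*|)}\|M_t\|$. Since $t\mapsto\|M_t\|$ is strictly increasing, this maximum is attained exactly at $t_0=\|A\|$ and at no other point. Hence $\Omega=\{\|A\|\}$ is a singleton. This holds also when $A=0$, where $\sigma(|A^*|)=\{0\}$ because $H$ is nontrivial.

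\textbf{Step 2: (i)$\Rightarrow$(ii).} Suppose $A$ attains its norm. By Lemma~\ref{lem:attainment-observation} (ii)$\Leftrightarrow$(iii) with $\alpha=1$, we get $\|A\|\in\sigma_p(|A^*|)$. So $t_0=\|A\|\in\Omega\cap\sigma_p(|A^*|)$, and Lemma~\ref{from A to T} shows that $T$ attains its norm.

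\textbf{Step 3: (ii)$\Rightarrow$(i).} Suppose $T$ attains its norm. Since $\Omega=\{\|A\|\}$ is a singleton, Lemma~\ref{lem:norm_attainment_necessity} gives $\|A\|\in\sigma_p(|A^*|)$. Lemma~\ref{lem:attainment-observation} (iv)$\Rightarrow$(i) then shows that $A$ attains its norm.

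The hypotheses of both lemmas on $\varphi$ are exactly those of the theorem, so the argument needs no further input. The only point requiring care is Step 1: checking that $\max\sigma(|A^*|)=\|A\|$ and that strict monotonicity forces uniqueness of the maximizer. Both follow directly because $|A^*|$ is positive with norm $\|A\|$, so I expect no real obstacle.
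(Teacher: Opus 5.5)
Your proposal is correct and matches the paper's proof: strict monotonicity forces $\Omega=\{\|A\|\}$, then Lemma~\ref{from A to T} gives (i)$\Rightarrow$(ii) and Lemma~\ref{lem:norm_attainment_necessity} gives (ii)$\Rightarrow$(i), with Lemma~\ref{lem:attainment-observation} translating between norm attainment of $A$ and $\|A\|\in\sigma_p(|A^*|)$. The only slip is in Step 2, where the implication you need is (i)$\Rightarrow$(iii) rather than (ii)$\Leftrightarrow$(iii); this is harmless because all six conditions of that lemma are equivalent.
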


\begin{proof}
    Let $\Omega$ be as defined in \eqref{defn of Omega}.
    By assumption, $\Omega$ is the  singleton $\{\|A\|\}$.
    If $A$ attains its norm, then $\|A\| \in \Omega \cap \sigma_p(|A^*|)$.
    Hence, by Lemma~\ref{from A to T}, $T$ attains its norm.
    Conversely, suppose $T$ attains its norm.
    Then Lemma~\ref{lem:norm_attainment_necessity} yields $\|A\| \in \sigma_p(|A^*|)$, i.e., $A$ attains its norm.
\end{proof}

As immediate consequences of Theorem~\ref{thm:norm increasing-01}, Corollary~\ref{cor:sufficient condition increasing norm} and Theorem~\ref{thm:norm_attainment_equivalence}, we obtain the following two corollaries.

\begin{corollary}\label{cor:attainment_derivative_condition}
    Let $\varphi \colon \mathbb{R}_+ \to \mathbb{R}$ be a continuous, nonnegative, and strictly increasing function that is differentiable on $(0, +\infty)$. If $\varphi(t) \geq \frac{1}{4} t\varphi^\prime(t)$ for all $t \in (0, +\infty)$, then for any $A \in \mathbb{B}(K, H)$ and $a, b, c \in \mathbb{C}$, the block operator matrix $T$ defined in \eqref{def of T} attains its norm if and only if $A$ attains its norm.
\end{corollary}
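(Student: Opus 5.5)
The corollary follows by chaining Theorem~\ref{thm:norm increasing-01} with Theorem~\ref{thm:norm_attainment_equivalence}. The work is just checking that the hypotheses match.

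First, verify the hypotheses of Theorem~\ref{thm:norm increasing-01}. By assumption, $\varphi$ is continuous on $\mathbb{R}_+$ and differentiable on $(0,+\infty)$, which is condition (i). It is also nonnegative and strictly increasing, which is condition (ii). The inequality $\varphi(t)\ge \frac14 t\varphi'(t)$ for all $t>0$ is exactly statement (b) of that theorem. The implication (b)$\Longrightarrow$(c) then gives that, for every choice of $a,b,c\in\mathbb{C}$, the map $t\mapsto\|M_t\|$ is strictly increasing on $\mathbb{R}_+$.

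Second, verify the hypotheses of Theorem~\ref{thm:norm_attainment_equivalence}. That theorem needs $\varphi$ continuous with $\varphi(t)\ge\varphi(0)\ge 0$ for all $t\in\mathbb{R}_+$. Here $\varphi(0)\ge0$ holds because $\varphi$ is nonnegative, and $\varphi(t)\ge\varphi(0)$ holds because $\varphi$ is increasing. Fix arbitrary $A\in\mathbb{B}(K,H)$ and $a,b,c\in\mathbb{C}$. The first step supplies strict monotonicity of $t\mapsto\|M_t\|$ for this particular triple $a,b,c$. Theorem~\ref{thm:norm_attainment_equivalence} then yields that $T$ attains its norm if and only if $A$ attains its norm.

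There is no genuine obstacle. The only point to watch is the order of quantifiers. Theorem~\ref{thm:norm increasing-01}(c) is a statement for all $a,b,c$, whereas Theorem~\ref{thm:norm_attainment_equivalence} uses monotonicity only for the fixed $a,b,c$ appearing in $T$. Since the universal statement specializes to any fixed triple, the argument goes through for arbitrary $a,b,c$ and $A$.
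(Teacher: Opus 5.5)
Your proposal is correct and is the paper's own argument: the paper states this corollary as an immediate consequence of the implication (b)$\Longrightarrow$(c) in Theorem~\ref{thm:norm increasing-01} together with Theorem~\ref{thm:norm_attainment_equivalence}. Your hypothesis check is the one needed: nonnegativity and monotonicity give $\varphi(t)\ge\varphi(0)\ge 0$, and the universal statement in (c) is specialized to the fixed triple $a,b,c$.
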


\begin{corollary}\label{cor:attainment_real_part_condition}
    Let $\varphi \colon \mathbb{R}_+ \to \mathbb{R}$ be a continuous, nonnegative, and strictly increasing function. If $a, b, c \in \mathbb{C}$ satisfy $\mathrm{Re}\bigl(\bar{a}\bar{b}c\bigr) \ge 0$, then for any $A \in \mathbb{B}(K, H)$, the block operator matrix $T$ defined in \eqref{def of T} attains its norm if and only if $A$ attains its norm.
\end{corollary}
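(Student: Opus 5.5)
This corollary is obtained by chaining Corollary~\ref{cor:sufficient condition increasing norm} with Theorem~\ref{thm:norm_attainment_equivalence}. The plan has three steps: check the hypotheses of Theorem~\ref{thm:norm_attainment_equivalence}, obtain the strict monotonicity of $t\mapsto\|M_t\|$, and then apply that theorem.

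First, the standing hypotheses of Theorem~\ref{thm:norm_attainment_equivalence} hold. Since $\varphi$ is continuous, nonnegative and strictly increasing on $\mathbb{R}_+$, we have $\varphi(t)\ge\varphi(0)\ge 0$ for all $t\in\mathbb{R}_+$. This is exactly the condition required there. It also ensures that $\varphi(|A|)$ is defined by the continuous functional calculus and that $T$ in \eqref{def of T} is a well-defined operator.

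Second, we need strict monotonicity of $t\mapsto\|M_t\|$ on $\mathbb{R}_+$, where $M_t$ is defined in \eqref{defn of M t}. By assumption $\varphi$ is nonnegative and strictly increasing, and $a,b,c$ satisfy $\mathrm{Re}(\bar a\bar b c)\ge 0$. These are precisely the hypotheses of Corollary~\ref{cor:sufficient condition increasing norm}, which therefore gives that $\|M_t\|$ is strictly increasing in $t$ on $\mathbb{R}_+$. No differentiability of $\varphi$ is needed at this stage, since that corollary relies only on Lemma~\ref{WX d zero} and Theorem~\ref{thm:norm increasing iff condition}.

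Third, with both hypotheses in place, Theorem~\ref{thm:norm_attainment_equivalence} applies and yields the equivalence: $T$ attains its norm if and only if $A$ does. The argument involves no real obstacle. The only point requiring care is confirming that "nonnegative and strictly increasing" implies the condition $\varphi(t)\ge\varphi(0)\ge0$ used in Section~\ref{sec:norm attainment}, and this is immediate.
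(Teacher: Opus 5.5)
Your proposal is correct and is the paper's argument: Corollary~\ref{cor:sufficient condition increasing norm} gives strict monotonicity of $t\mapsto\|M_t\|$. Theorem~\ref{thm:norm_attainment_equivalence} then gives the equivalence, and you rightly check that nonnegativity plus monotonicity of $\varphi$ supply its hypothesis $\varphi(t)\ge\varphi(0)\ge 0$.
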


We conclude this section with several illustrative examples.

\begin{example}
For $A \in \mathbb{B}(K, H)$, $a, b, c \in \mathbb{C}$, $k, d \in \mathbb{R}_+$, and $\alpha \in [0, 4]$, define $T$ as
\begin{equation*}
T = \begin{pmatrix}
    aI_{H} & A \\
    cA^* & b\big(kI_H + d|A|^\alpha\big)
\end{pmatrix}.
\end{equation*}
From the structure of $T$, the function $\varphi$ in \eqref{def of T} corresponds to $\varphi(t) = k + dt^\alpha$ for $t \in \mathbb{R}_+$.
By Corollary~\ref{cor:norm increasing-01} and Theorem~\ref{thm:norm_attainment_equivalence}, $T$ attains its norm if and only if $A$ does.
Note that the degenerate case where $k = \alpha = 0$ and $d = 1$ was previously treated in \cite[Theorem~2.5]{WX}.
\end{example}

\begin{example}
For $d > 0$, let $H = L^2([0, d])$ be the Hilbert space of square-integrable functions on $[0, d]$.
Define $A \in \mathbb{B}(H)_+$ by $A = M_t$, the multiplication operator by the independent variable. That is,
\[
(Af)(t) = tf(t), \quad \forall f \in H,\; \forall t \in [0, d].
\]
It is evident that $\sigma(A) = [0, d]$ and $\sigma_p(A) = \emptyset$; consequently, $A$ fails to attain its norm.

Now, let $A$ be given as above.
For $a, b, c \in \mathbb{C}$ and a continuous function $\varphi: \mathbb{R}_+ \to \mathbb{R}$ with $\varphi(t) \geq \varphi(0) \geq 0$,
define $T\in\mathbb{B}(H\oplus H)$ via \eqref{def of T}. If the set $\Omega$ defined by \eqref{defn of Omega} is a singleton, then
Lemma~\ref{lem:norm_attainment_necessity} implies that $T$ also fails to attain its norm.
\end{example}

\begin{example}\label{ex:counterexample_necessity}
 Let $L^2_a(\mathbb{D})$ be the Bergman space consisting of analytic functions in $L^2(\mathbb{D}, \mathrm{d}A)$, where $\mathbb{D}$
    is the open unit disk in $\mathbb{C}$, and $dA(z)$ is the area measure on $\mathbb{D}$ normalized so that the area of $\mathbb{D}$ is 1.
     Let $P \colon L^2(\mathbb{D}, \mathrm{d}A) \to L^2_a(\mathbb{D})$ denote the Bergman projection.
 It is well-known that the sequence $\{e_n\}_{n=0}^\infty$, with $e_n(z) = \sqrt{n+1}\,z^n$, forms an orthonormal basis for $L^2_a(\mathbb{D})$.
For a symbol $\psi \in L^\infty(\mathbb{D})$, the Toeplitz operator $T_\psi$ on $L^2_a(\mathbb{D})$ is defined by
\[
T_\psi g = P(\psi g), \qquad g \in L^2_a(\mathbb{D}).
\]
If $\psi$ is radical (i.e., $\psi(z) = \psi(|z|)$), then $T_\psi$ is diagonalized by $\{e_n\}$, with eigenvalues
\[
\lambda_n = \langle T_\psi e_n, e_n \rangle = 2(n+1) \int_0^1 \psi(r)\, r^{2n+1} \,\mathrm{d}r,  \quad n\in\{0\}\cup\mathbb{N}.
\]
We refer to \cite[Theorem~1]{Xu1} for details.

Now take $\psi_0(z) = \bar{z}$ and let $A = T_{\psi_0}$. Then $(A^*f)(z) = z f(z)$ for all $f \in L^2_a(\mathbb{D})$ and $z \in \mathbb{D}$, so that $A A^* = T_{|z|^2}$. Consequently,
\[
A A^* = \operatorname{diag}(\lambda_0, \lambda_1, \dots), \qquad \text{where } \lambda_n = \frac{n+1}{n+2},\;  n\in\{0\}\cup\mathbb{N}.
\]
Hence,
\[
\sigma_p(|A^*|) = \bigl\{\sqrt{\lambda_n} : n\in \{0\}\cup\mathbb{N}\bigr\}
\quad\text{and}\quad
\sigma(|A^*|) = \sigma_p(|A^*|) \cup \{1\}.
\]
Since $\|A\|=1$ and $1\notin \sigma_p(|A^*|)$, $A$ does \emph{not} attain its norm.

Next, set $H = L^2_a(\mathbb{D})$ and define $T \in \mathbb{B}(H \oplus H)$ by
\[
T =
\begin{pmatrix}
-2 I_H & A \\
A^* & 2 |A|^5
\end{pmatrix}.
\]
In terms of the notation from \eqref{def of T}, this corresponds to
\[
a = -2,\quad b = 2,\quad c = 1,\quad \varphi(t) = t^5.
\]
By Example~\ref{ex:01}, there exists $n_0 \in \mathbb{N}$ such that $\sqrt{\lambda_{n_0}} \in \Omega \cap \sigma_p(|A^*|)$. An application of Lemma~\ref{from A to T} now shows that $T$ \emph{does} attain its norm.
\end{example}

\begin{example}\label{ex:counterexample_necessity_2}
Let $t_*$ be the unique root of $h(t)=0$ in $(0,1)$, where $h$ is defined by \eqref{defn of h}. Fix two numbers $t_1, t_2 \in (t_*, 1)$ with $t_1 < t_2$, and set $E = [0, t_1] \cup [t_2, 1]$. Let $H = L^2(E)$ and define $A \in \mathbb{B}(H)_+$ by
\[
(Ax)(t) = \rho(t)x(t), \quad \forall x \in H,\; t \in E,
\]
where
\[
\rho(t) =
\begin{cases}
t, & t \in [0, t_1], \\
1, & t \in [t_2, 1].
\end{cases}
\]
It follows that $\sigma(A) = [0, t_1] \cup \{1\}$ and $\sigma_p(A) = \{1\}$. In particular, $A$ attains its norm, but $t_* \notin \sigma_p(A)$.

For $t \in \mathbb{R}_+$, let $M_t$ be as defined in \eqref{M t 4 counterexample}, and define $T \in \mathbb{B}(H \oplus H)$ by
\[
T =
\begin{pmatrix}
-2 I_H & A \\
A & 2 A^5
\end{pmatrix}.
\]
By Example~\ref{ex:01}, the set $\Omega$ from \eqref{defn of Omega} is the singleton $\{t_*\}$. If $T$ were to attain its norm, then Lemma~\ref{lem:norm_attainment_necessity} would imply $t_* \in \sigma_p(A)$, contradicting the construction above. Hence, $T$ does \emph{not} attain its norm.
\end{example}

\end{document}